\theoremstyle{plain}
\newtheorem{theorem}{Theorem}[section]
\newtheorem{lemma}[theorem]{Lemma}
\newtheorem{corollary}[theorem]{Corollary}
\theoremstyle{definition}
\newtheorem{remark}[theorem]{Remark}
\newtheorem{definition}[theorem]{Definition}
\newtheorem{example}[theorem]{Example}
\DeclareMathOperator{\T}{T}
\DeclareMathOperator{\Arg}{Arg}
\DeclareMathOperator{\cratio}{cr}
\DeclareMathOperator{\Real}{Re}
\DeclareMathOperator{\Imaginary}{Im}
\DeclareMathOperator{\grad}{grad}
\renewcommand{\Re}{\Real}
\renewcommand{\Im}{\Imaginary}
\begin{document}
	
	\title[]{Holomorphic vector fields and quadratic differentials on planar triangular meshes}
	\author{Wai Yeung Lam}
	\author{Ulrich Pinkall}
	
	\address{Wai Yeung Lam\\
		Technische Universit\"at Berlin\\Institut f\"ur Mathematik\\
		Stra{\ss}e des 17.\ Juni 136\\
		10623 Berlin\\ Germany}
	
	\address{Ulrich Pinkall\\
		Technische Universit\"at Berlin\\Institut f\"ur Mathematik\\
		Stra{\ss}e des 17.\ Juni 136\\
		10623 Berlin\\ Germany}
	
	\email{lam@math.tu-berlin.de, pinkall@math.tu-berlin.de}

	\begin{abstract}{
				Given a triangulated region in the complex plane, a discrete vector field $Y$ assigns a vector $Y_i\in \mathbb{C}$ to every vertex. We call such a vector field holomorphic if it defines an infinitesimal deformation of the triangulation that preserves length cross ratios. We show that each holomorphic vector field can be constructed based on a discrete harmonic function in the sense of the cotan Laplacian. Moreover, to each holomorphic vector field we associate in a M\"obius invariant fashion a certain holomorphic quadratic differential. Here a quadratic differential is defined as an object that assigns a purely imaginary number to each interior edge. Then we derive a Weierstrass representation formula, which shows how a holomorphic quadratic differential can be used to construct a discrete minimal surface with prescribed Gau{\ss} map and prescribed Hopf differential.
		}
	\end{abstract}
	
	\thanks{This research was supported by the DFG Collaborative Research Centre SFB/TRR 109 \emph{Discretization in Geometry and Dynamics}.}
	
	\date{\today}
	
	\maketitle
	
	\section{Introduction}
	Consider an open subset $U$ in the complex plane $\mathbb{C}\cong \mathbb{R}^2$ with coordinates $z=x+iy$ together with a holomorphic vector field
	\begin{equation*}
		Y=f\frac{\partial}{\partial x}.
	\end{equation*}
	Here $Y$ is a real vector field. It assigns to each $p\in \mathbb{R}^2$ the vector $f(p)\in\mathbb{C}\cong \mathbb{R}^2$. We do not consider objects like $\frac{\partial}{\partial z}$ which are sections of the complexified tangent bundle $(\T\mathbb{R}^2)^{\mathbb{C}}$.
	
	Note $f:U \to \mathbb{C}$ is a holomorphic function, i.e.
	\begin{equation*}
		0=f_{\bar{z}}=\frac{1}{2}\left(\frac{\partial f}{\partial x}+i\frac{\partial f}{\partial y}\right).
	\end{equation*}
	Let $t \mapsto g_t$ denote the local flow of $Y$ (defined for small $t$ on open subsets of $U$ with compact closure in $U$). Then the euclidean metric pulled back under $g_t$ is conformally equivalently to the original metric:
	\begin{equation*}
		g_t^*\langle \,,\rangle=e^{2u}\langle \,,\rangle
	\end{equation*}
	for some real-valued function $u$. The infinitesimal change in scale $\dot{u}$ is given by
	\begin{equation*}
		\dot{u}=\frac{1}{2}\mbox{div}\,Y = \mbox{Re}\left(f_z\right).
	\end{equation*}
	Note that $\dot{u}$ is a harmonic function:
	\begin{equation*}
		\dot{u}_{z\bar{z}}=0.
	\end{equation*}
	On the other hand, differentiating $\dot{u}$ twice with respect to $z$ yields one half the third derivative of $f$:
	\begin{equation*}
		\dot{u}_{zz}=\frac{1}{2}f_{zzz}.
	\end{equation*}
	It is well-known that the vector field $Y$ corresponds to an infinitesimal M\"obius transformation of the extended complex plane $\overline{\mathbb{C}}$ if and only if $f$ is a quadratic polynomial. In this sense $f_{zzz}$ measures the infinitesimal ``change in M\"obius structure'' under $Y$ (M\"obius structures are sometimes also called ``complex projective structures'' \cite{Gunning1966}). Moreover, the holomorphic quadratic differential
	\begin{equation*}
		q:=f_{zzz}\,dz^2
	\end{equation*}
	is invariant under M\"obius transformations $\Phi$. This is equivalent to saying that $q$ is unchanged under a change of variable $\Phi(z)=w=\xi+i\eta$ whenever $\Phi$ is a M\"obius transformation. This is easy to see if $\Phi(z)=az+b$ is an affine transformation. In this case
	\begin{align*}
		dw&=a\,dz\\
		\frac{d}{dw}&=\frac{1}{a}\frac{d}{dz}
	\end{align*}
	and therefore
	\begin{equation*}
		Y=\tilde{f}\frac{\partial}{\partial \xi}
	\end{equation*}
	with
	\begin{equation*}
		\tilde{f}=a\,f.
	\end{equation*}
	Thus we indeed have
	\begin{equation*}
		\tilde{f}_{www}\,dw^2=f_{zzz}\,dz^2.
	\end{equation*}
	A similar argument applies to $\Phi(z)=\frac{1}{z}$ and therefore to all M\"obius transformations.
	
	For realizations from an open subset $U$ of the Riemann sphere $\mathbb{C}\textrm{P}^1$ the vanishing of the Schwarzian derivative characterizes M\"obius transformations. The quadratic differential $q$  plays a similar role for vector fields. We call $q$ the {\em M\"obius derivative} of $Y$.
	
	An important geometric context where holomorphic quadratic differentials arise comes from the theory of minimal surfaces: Given a simply connected Riemann surface $M$ together with a holomorphic immersion $g:M \to S^2 \subset \mathbb{R}^3$ and a holomorphic quadratic differential $q$ on $M$, there is a minimal surface $F:M\to \mathbb{R}^3$ (unique up to translations) whose Gau{\ss} map is $g$ and whose second fundamental form is $\mbox{Re}\,q$.
	
	In this paper we will provide a discrete version for all details of the above story. Instead of smooth surfaces we will work with triangulated surfaces of arbitrary combinatorics. The notion of conformality will be that of conformal equivalence as explained in \cite{Bobenko2010}. Holomorphic vector fields will be defined as infinitesimal conformal deformations.
	
	There is also a completely parallel discrete story where conformal equivalence of planar triangulations is replaced by preserving intersection angles of circumcircles. To some extent we also tell this parallel story that belongs to the world of circle patterns.
	
	The results on planar triangular meshes in this paper are closely related to isothermic triangulated surfaces in Euclidean space \cite{Lam2015}.
	
	\section{Discrete conformality}
	
	In this section, we review two notions of discrete conformality for planar triangular meshes. We first start with some notations of triangular meshes.
	
	\begin{definition}
		A triangular mesh $M$ is a simplicial complex whose underlying topological space is a connected 2-manifold (with boundary). The set of vertices (0-cells), edges (1-cells) and triangles (2-cells) are denoted as $V$, $E$ and $F$.
	\end{definition}
	
	We denote $E_{int}$ the set of interior edges and $V_{int}$ the set of interior vertices. Without further notice we will assume that all triangular meshes under consideration are oriented.
	
	\begin{definition}
		A {\em realization} $z:V \to \mathbb{C}$ of a triangular mesh $M$ in the extended complex plane assigns to each vertex $i\in V$ a point $z_i \in \overline{\mathbb{C}}$ in such a way that for each triangle $\{ijk\}\in F$ the points corresponding to its three vertices are not collinear.
	\end{definition}
	
	Given two complex numbers $z_1,z_2 \in \mathbb{C}$ we write
	\[
	\langle z_1, z_2 \rangle := \Re( \bar{z}_1 z_2).
	\]
	
	We are looking for suitable definitions of conformal structure of a realization $z$. In particular, we want $z$ to be conformally equivalent to $g\circ z$ whenever $g: \overline{\mathbb{C}} \to \overline{\mathbb{C}}$ is a M\"{o}bius transformations. This requirement will certainly be met if we base our definitions on complex cross ratios: Given a triangular mesh $z:V\to \mathbb{C}$, we  associate a complex number to each interior edge $\{ij\} \in E_{int}$, namely the \emph{cross ratio} of the corresponding four vertices (See Figure \ref{fig:orientation})
	\[
	\cratio_{z,ij} = \frac{(z_{j} - z_k)(z_i - z_l)}{(z_k - z_i)(z_l - z_{j})}.
	\]
	Notice that $\cratio_{z,ij}=\cratio_{z,ji}$ and hence $\cratio_z:E_{int} \to \mathbb{C}$ is well defined.
	\begin{figure}[h]
		\centering
		\def\svgwidth{0.4\textwidth}
		\resizebox{0.4\textwidth}{!}{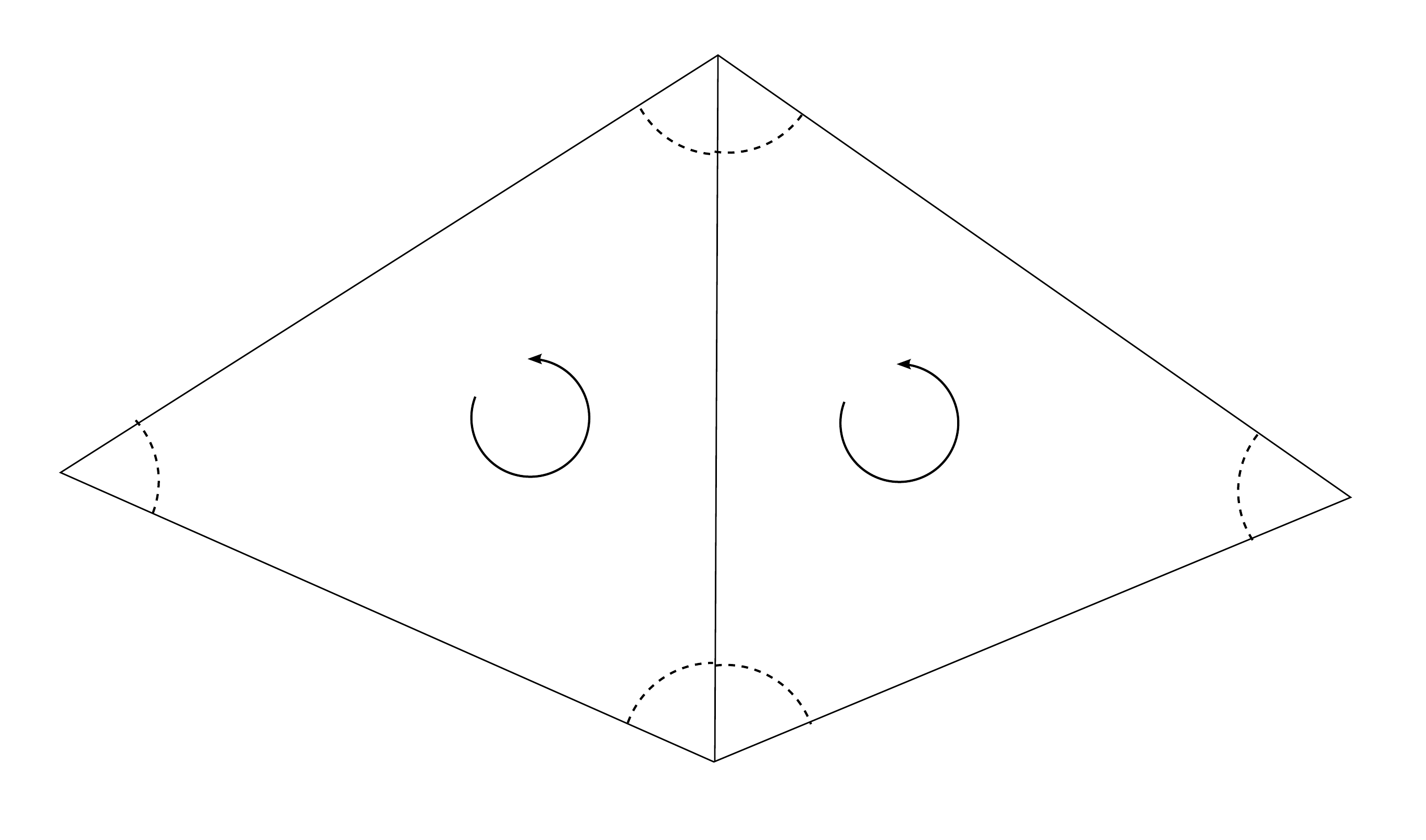}
		\caption{Two neighboring and oriented triangles}
		\label{fig:orientation}
	\end{figure}
	It is easy to see that two realizations differ only by a M\"{o}bius transformation if and only if their corresponding cross ratios are the same. In order to arrive at a more flexible notion of conformality we need to relax the condition that demands the equality of all cross ratios. Two natural ways to do this is to only require equality of either the norm or alternatively the argument of the cross ratios. This leads to two different notions of discrete conformality: \textit{conformal equivalence theory} \cite{Luo2004,Springborn2008} and \textit{circle pattern theory} \cite{Schramm1997}. 
	
	Note that for the sake of simplicity of exposition we are ignoring here realizations in $\overline{\mathbb{C}}$ where one of the vertices is mapped to infinity.
	
	\subsection{Conformal equivalence}
	
	The edge lengths of a triangular mesh realized in the complex plane provide a discrete counterpart for the induced Euclidean metric in the smooth theory. A notion of conformal equivalence based on edge lengths was proposed by Luo \cite{Luo2004}. Later Bobenko et al. \cite{Bobenko2010} stated this notion in the following form:	
	\begin{definition}
		Two realizations of a triangular mesh $z,w:V \to \mathbb{C}$ are \emph{conformally equivalent} if the norm of the corresponding cross ratios are equal:
		\[|\cratio_{z}| \equiv |\cratio_{w}|,\]
		i.e. for each interior edge $\{ij\}$ 
		\begin{equation*}
			\frac{|(z_{j} - z_k)||(z_i - z_l)|}{|(z_k - z_i)||(z_l - z_{j})|} = \frac{|(w_{j} - w_k)||(w_i - w_l)|}{|(w_k - w_i)||(w_l - w_{j})|}.
		\end{equation*}
	\end{definition}
	
	This definition can be restated in an equivalent form that closely mirrors the notion of conformal equivalence of Riemannian metrics:
	\begin{theorem} \label{thm:conformalequivalence}
		Two realizations of a triangular mesh $z,w:V \to \mathbb{C}$ are conformally equivalent if and only if there exists $u:V\rightarrow \mathbb{R}$ such that
		\[
		|w_{j} -w_i|=e^{\frac{u_{i}+u_{{j}}}{2}}|z_{j} -z_i|.
		\]
	\end{theorem}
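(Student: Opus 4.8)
The plan is to pass to logarithms of edge lengths and thereby reduce the theorem to a statement about when a function on edges splits as a sum of vertex values. For every edge $\{ij\}\in E$ define
\[
\lambda_{ij} := \log|w_i - w_j| - \log|z_i - z_j|,
\]
which is symmetric in $i,j$ and well defined since no two vertices of a triangle coincide. The asserted length relation is then exactly the existence of $u:V\to\mathbb{R}$ with $\lambda_{ij}=\tfrac12(u_i+u_j)$ on every edge, while conformal equivalence says that for each interior edge $\{ij\}$ with opposite vertices $k,l$ one has
\[
\lambda_{jk}+\lambda_{il}-\lambda_{ki}-\lambda_{lj}=0,
\]
obtained by taking $\log|\,\cdot\,|$ of the identity $|\cratio_z|\equiv|\cratio_w|$.

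For the direction assuming the length relation I would simply substitute $\lambda_{ij}=\tfrac12(u_i+u_j)$ into the left-hand side above: the two numerator edges contribute $+\tfrac12(u_i+u_j+u_k+u_l)$ and the two denominator edges contribute the same quantity with a minus sign, so everything cancels and the cross-ratio norms agree. This is the easy half.

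The substance is the converse. First I would record that on a single triangle $\{ijk\}$ the three equations $\lambda_{ij}=\tfrac12(u_i+u_j)$, $\lambda_{jk}=\tfrac12(u_j+u_k)$, $\lambda_{ki}=\tfrac12(u_k+u_i)$ have the unique solution $u_i=\lambda_{ij}+\lambda_{ki}-\lambda_{jk}$ and cyclically; thus each face prescribes candidate vertex values. The crux is consistency: two triangles $\{ijk\}$ and $\{ijl\}$ sharing the interior edge $\{ij\}$ assign to the common vertex $i$ the values $\lambda_{ij}+\lambda_{ki}-\lambda_{jk}$ and $\lambda_{ij}+\lambda_{li}-\lambda_{jl}$, and these agree precisely when $\lambda_{ik}+\lambda_{jl}=\lambda_{jk}+\lambda_{il}$ — which is exactly the conformal-equivalence relation displayed above, and the very same equation forces agreement of the value at $j$. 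So the cross-ratio condition is nothing other than the cocycle condition needed to glue the per-face solutions, and recognizing this identification is the key step.

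It remains to assemble a global $u$, which is where I expect the only real care to be needed. For a fixed vertex $i$ the incident triangles form, by the manifold hypothesis, a single fan — a closed cycle when $i$ is interior, an arc when $i$ lies on the boundary — in which consecutive triangles share an edge through $i$. The pairwise consistency just established then propagates along this fan, so all incident faces assign $i$ the same number; defining $u_i$ to be this common value yields a well-defined $u:V\to\mathbb{R}$. By construction $\lambda_{ij}=\tfrac12(u_i+u_j)$ holds on every edge of every triangle, hence on all of $E$ since each edge lies in at least one face, giving the desired relation. The one point to check carefully is that the link of each vertex is connected, so that the chain of consistencies actually closes up; this is guaranteed by $M$ being a simplicial $2$-manifold, and notably no global simple-connectivity hypothesis is required.
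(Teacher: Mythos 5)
Your proof is correct and follows essentially the same route as the paper: both pass to logarithmic length ratios, solve for $u_i = \lambda_{ij}+\lambda_{ki}-\lambda_{jk}$ triangle by triangle, and use the fact that the vertex star is a single fan (closed for interior vertices, an arc for boundary vertices) to glue these per-face values into a global function. You merely make explicit the consistency computation---that agreement of adjacent triangles' candidate values is precisely the cross-ratio condition at the shared interior edge---which the paper asserts without spelling out.
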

	\begin{proof}
		It is easy to see that the existence of $u$ implies conformal equivalence. Conversely, for two conformally equivalent realizations $z,w$, we define a function $\sigma: E \to \mathbb{R}$ by
		\[
		|w_{j} -w_i|=e^{\sigma_{ij}}|z_{j} -z_i|.
		\]
		Since $z,w$ are conformally equivalent $\sigma$ satisfies for each interior edge $\{ij\}$
		\[
		\sigma_{jk}-\sigma_{ki}+\sigma_{il}-\sigma_{lj}=0.
		\]
		For any vertex $i$ and any triangle $\{ijk\}$ containing it we then define
		\[
		e^{u_i} := e^{\sigma_{ki} + \sigma_{ij} - \sigma_{{j}k}}. 
		\]
		Note the vertex star of $i$ is a triangulated disk if $i$ is interior, or is a fan if $i$ is a boundary vertex. Hence the value $u_i$ defined in this way is independent of the chosen triangle.  		
	\end{proof}
	
	\subsection{Circle patterns}
	
	Given a triangular mesh realized in the complex plane we consider the circumscribed circles of its triangles. These circles inherit an orientation from their triangles. The intersection angles of these circles from neighboring triangles define a function $\phi : E_{int} \to [0, 2\pi)$ which is related to the argument of the corresponding cross ratio via
	\begin{equation} \label{eq:intersectionangles}
		e^{i \phi_{ij}} = \Arg(\cratio_{z,ij}).
	\end{equation}
	
	\begin{figure}[h]
		\centering
		\def\svgwidth{0.4\textwidth}
		\resizebox{0.4\textwidth}{!}{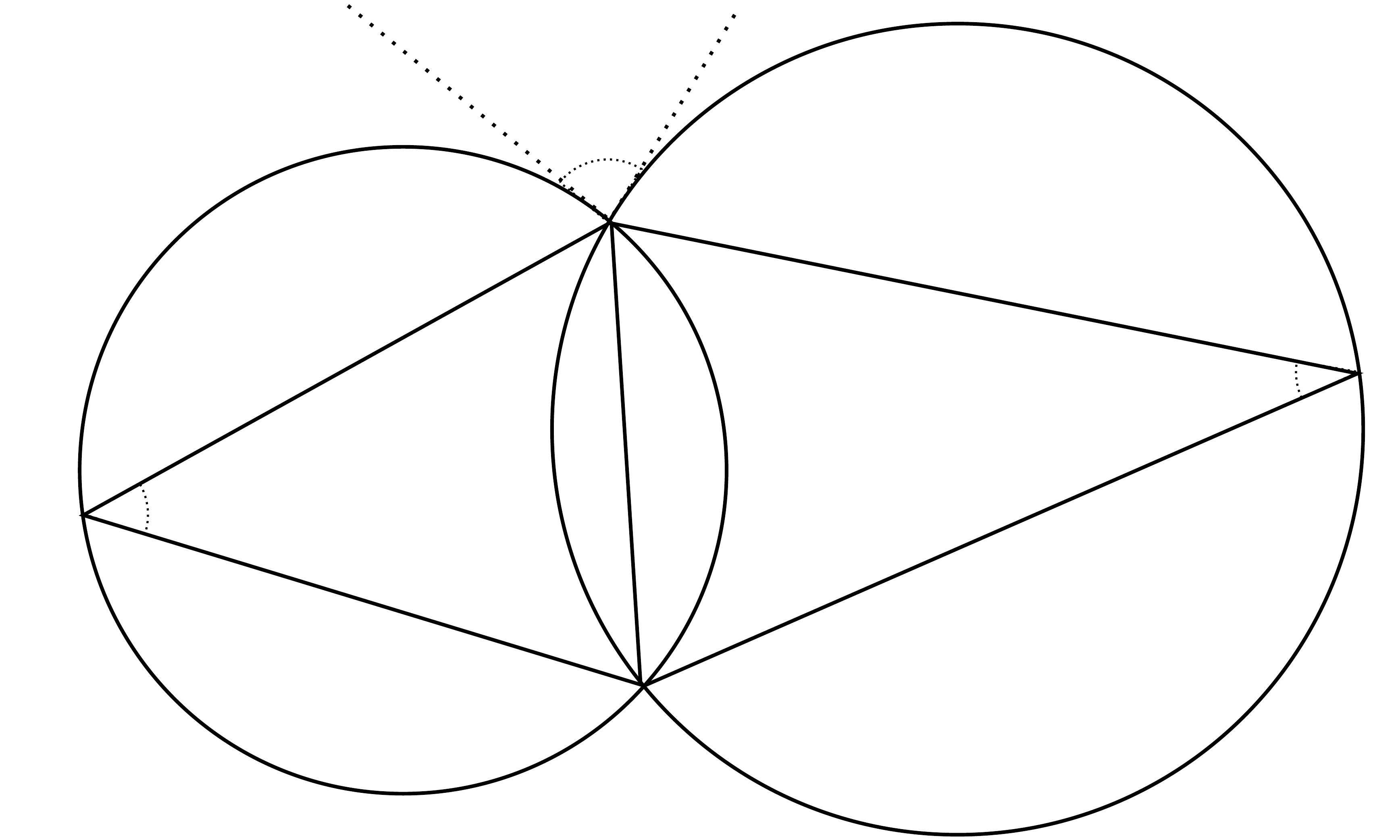}
		\caption{The intersection angle of two neighboring circumscribed circles}
	\end{figure}
	
	Based on these angles we obtain another notion of discrete conformality which reflects the angle-preserving property that we have in the smooth theory.
	\begin{definition}
		Two realizations of a triangular mesh $z,w:V \to \mathbb{C}$ have the same \emph{pattern structure} if the corresponding intersection angles of neighboring circumscribed circles are equal: 
		\[
		\Arg(\cratio_{z,ij})=\Arg(\cratio_{w,ij}),
		\]
		i.e. for each interior edge $\{ij\}$
		\[
		\Arg \frac{(z_{j} - z_k)(z_i - z_l)}{(z_k - z_i)(z_l - z_{j})} = \Arg \frac{(w_{j} - w_k)(w_i - w_l)}{(w_k - w_i)(w_l - w_{j})}.
		\]
	\end{definition}
	
	Just as conformal equivalence was related to scale factors $u$ at vertices, having the same pattern structure is related to the existence of certain angular velocities $\alpha$ located at vertices:
	
	\begin{theorem}
		Two realizations of a triangular mesh $z,w:V \to \mathbb{C}$ have the same pattern structure if and only if there exists $\alpha:V \rightarrow [0, 2\pi )$ such that
		\[
		\frac{w_{j} -w_i}{|w_{j} -w_i|}=e^{i\frac{\alpha_{i}+\alpha_{j}}{2}}\frac{z_{j} -z_i}{|z_{j} -z_i|}.
		\]
	\end{theorem}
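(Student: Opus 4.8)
The plan is to imitate, step for step, the proof of Theorem~\ref{thm:conformalequivalence}, replacing the real multiplicative scale factors by unit-modulus rotations and reading every identity modulo $2\pi$.

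The direction ``$\Leftarrow$'' should be a direct computation. Assuming such an $\alpha$ exists, every directed edge satisfies $\frac{w_{j}-w_{i}}{|w_{j}-w_{i}|}=e^{i(\alpha_{i}+\alpha_{j})/2}\frac{z_{j}-z_{i}}{|z_{j}-z_{i}|}$, so substituting the four factors of $\cratio_{w,ij}$ and collecting phases gives $\Arg(\cratio_{w,ij})=e^{i[(\alpha_{j}+\alpha_{k})+(\alpha_{i}+\alpha_{l})-(\alpha_{k}+\alpha_{i})-(\alpha_{l}+\alpha_{j})]/2}\,\Arg(\cratio_{z,ij})$. The exponent telescopes to $0$, the positive real magnitude ratios do not affect the argument, and hence $z,w$ have the same pattern structure.

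For the converse I would first introduce the edge rotation $\tau\colon E\to\mathbb{R}/2\pi\mathbb{Z}$ defined by $\frac{w_{j}-w_{i}}{|w_{j}-w_{i}|}=e^{i\tau_{ij}}\frac{z_{j}-z_{i}}{|z_{j}-z_{i}|}$; reversing the orientation multiplies both sides by $-1=e^{i\pi}$, so $\tau_{ij}=\tau_{ji}$ and $\tau$ is well defined on undirected edges. Substituting this into the four factors of $\cratio_{w,ij}$ exactly as above yields, for each interior edge $\{ij\}$ with opposite vertices $k,l$, the relation $\Arg(\cratio_{w,ij})=e^{i(\tau_{jk}-\tau_{ki}+\tau_{il}-\tau_{lj})}\Arg(\cratio_{z,ij})$. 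Thus $z,w$ have the same pattern structure if and only if the closure condition $\tau_{jk}-\tau_{ki}+\tau_{il}-\tau_{lj}\equiv 0 \pmod{2\pi}$ holds on every interior edge — the precise analogue of $\sigma_{jk}-\sigma_{ki}+\sigma_{il}-\sigma_{lj}=0$ used before. Mimicking the earlier construction I would set $\alpha_i := \tau_{ki}+\tau_{ij}-\tau_{jk} \pmod{2\pi}$ for any triangle $\{ijk\}$ containing $i$. Passing to an adjacent triangle of the star of $i$ changes this expression exactly by the left-hand side of the closure condition at the shared edge, which vanishes mod $2\pi$; since the star is a triangulated disk (interior $i$) or a fan (boundary $i$), the value $\alpha_i$ is well defined.

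It remains to verify the asserted formula, and this is where the argument genuinely departs from the conformal case. Reading off $\alpha_i$ and $\alpha_j$ from a common triangle $\{ijk\}$ gives $\alpha_i+\alpha_j=2\tau_{ij}$ before reduction, so $e^{i(\alpha_i+\alpha_j)/2}=e^{i\tau_{ij}}$ is exactly what is wanted — but halving is only well defined modulo $2\pi$, not modulo $4\pi$, leaving a sign $e^{i(\alpha_i+\alpha_j)/2}=\pm e^{i\tau_{ij}}$. This square-root (half-angle) ambiguity, which has no counterpart in the single-valued real logarithm of Theorem~\ref{thm:conformalequivalence}, is the main obstacle. I expect to resolve it by seeking directly a vertex rotation $\beta_i=e^{i\alpha_i/2}\in S^1$ with $\beta_i\beta_j=\Arg\big((w_j-w_i)/(z_j-z_i)\big)=e^{i\tau_{ij}}$: the squares $\beta_i^2=e^{i\alpha_i}$ are already forced, so each edge determines $\beta_i\beta_j$ up to a sign $s_{ij}\in\{\pm1\}$, while the identity $\alpha_i+\alpha_j+\alpha_k=\tau_{ij}+\tau_{jk}+\tau_{ki}$ on every triangle forces $s_{ij}s_{jk}s_{ki}=1$. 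The signs can therefore be propagated consistently from a base vertex along a spanning tree, closing up around each triangle and hence — on a simply connected, disk-type mesh, the setting of interest — around every cycle. This fixes the rotations $e^{i\alpha_i/2}$, and with them $\alpha$, so that the required identity holds on all edges.
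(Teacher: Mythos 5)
Your proposal is correct in substance, and it is in fact \emph{more} careful than the paper's own proof. The paper argues exactly as you do up to the definition of $\alpha$: it takes any $\omega:E\to\mathbb{R}$ with $\frac{w_{j}-w_i}{|w_{j}-w_i|}=e^{i\omega_{ij}}\frac{z_{j}-z_i}{|z_{j}-z_i|}$, defines $\alpha_i\in[0,2\pi)$ by $e^{i\alpha_i}=e^{i(\omega_{ki}+\omega_{ij}-\omega_{jk})}$, checks via the disk/fan structure of vertex stars that this is independent of the chosen triangle --- and stops there. It never verifies the required identity $e^{i(\alpha_i+\alpha_{j})/2}=e^{i\omega_{ij}}$, which is precisely the point you flag: the construction only yields $\alpha_i+\alpha_{j}\equiv 2\omega_{ij}\pmod{2\pi}$, hence $e^{i(\alpha_i+\alpha_{j})/2}=\pm e^{i\omega_{ij}}$ with an uncontrolled sign. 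Your repair --- seeking unit numbers $\beta_i$ with $\beta_i\beta_{j}=e^{i\omega_{ij}}$, noting that the squares $\beta_i^2$ are forced, deriving the triangle condition $s_{ij}s_{jk}s_{ki}=1$ from $\alpha_i+\alpha_{j}+\alpha_k\equiv\omega_{ij}+\omega_{jk}+\omega_{ki}\pmod{2\pi}$, and propagating signs along a spanning tree --- is a correct $\mathbb{Z}/2$ argument on a simply connected mesh, where the cycle space of the $1$-skeleton is generated by triangle boundaries. So what looks like a detour in your write-up is actually the completion of an argument the paper leaves unfinished.

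Two caveats, both reflecting defects of the printed statement rather than of your reasoning. First, the theorem does not assume simple connectivity, while your sign propagation needs the class of $(s_{ij})$ in $H^1(M;\mathbb{Z}/2)$ to vanish; this is the discrete counterpart of extracting a global square root of $f'$ for a smooth map $w=f(z)$, which genuinely can fail on non-simply-connected domains, so some such hypothesis is unavoidable. Second, your construction produces the rotations $\beta_i=e^{i\alpha_i/2}$, i.e.\ an $\mathbb{R}$-valued (or $[0,4\pi)$-valued) $\alpha$, not $\alpha:V\to[0,2\pi)$ as stated; and with the range $[0,2\pi)$ the statement is actually false. For the one-triangle mesh with $z=(0,1,i)$ and $w=(0,1,10i)$ the pattern structures agree vacuously, and edges $\{12\}$, $\{13\}$ force $e^{i(\alpha_1+\alpha_2)/2}=e^{i(\alpha_1+\alpha_3)/2}=1$; if all $\alpha_i$ lie in $[0,2\pi)$ this forces $\alpha_1=\alpha_2=\alpha_3=0$, contradicting the nontrivial rotation required on edge $\{23\}$. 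So any correct proof must, as yours does, deliver real-valued $\alpha$ (equivalently, the unit rotations $e^{i\alpha_i/2}$ themselves); the half-angle ambiguity you isolate is exactly where the paper's proof is incomplete, and your spanning-tree argument is the right way to close it.
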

	\begin{proof}
		The argument is very similar to the one for Theorem \ref{thm:conformalequivalence}. In particular, the existence of the function $\alpha$ easily implies equality of the pattern structures. Conversely, assuming identical pattern structures we take any $\omega:E \to \mathbb{R}$ that satisfies		
		\[
		\frac{w_{j} - w_i}{|w_{j} - w_i|} =  e^{i\omega_{ij}} \frac{z_{j} - z_i}{|z_{j} - z_i|} .
		\]
		For any vertex $i$ and any triangle $\{ijk\}$ containing it we define $\alpha_i \in [0, 2\pi )$ such that
		\[
		e^{i \alpha_i} = e^{i (\omega_{ki} + \omega_{ij} - \omega_{jk})}.
		\]
		Note the vertex star of $i$ is a triangulated disk if $i$ is interior, or is a fan if $i$ is a boundary vertex. Hence having the same pattern structure implies that the value $\alpha_i$ is independent of the chosen triangle.  
	\end{proof}
	
	\section{Infinitesimal deformations and linear conformal theory}
	
	We will linearize both of the above notions of discrete conformality by considering infinitesimal deformations. This will allow us to relate them to linear discrete complex analysis, based on a discrete analogue of the Cauchy Riemann equations \cite{Ferrand1944,Duffin1956,Mercat2001} (See the survey \cite{Smirnov2010}).	
	\begin{definition} \label{def:infcon}
		An \emph{infinitesimal conformal deformation} of a realization $z:V \to \mathbb{C}$ of a triangular mesh is a map $\dot{z}: V \to \mathbb{C}$ such that there exists $u:V \to \mathbb{R}$ satisfying 
		\[
		\Re{\frac{\dot{z}_{j} - \dot{z}_i}{z_{j}-z_i}} = \frac{\langle \dot{z}_{j}-\dot{z}_i, z_{j} - z_i \rangle}{|z_{j}-z_i|^2} = \frac{u_i+u_{j}}{2}.
		\]
		We call $u$ the \emph{scale change} at vertices.
	\end{definition}
	
	\begin{definition} \label{def:infpatt}
		An \emph{infinitesimal pattern deformation} of a realization $z:V \to \mathbb{C}$ of a triangular mesh is a map $\dot{z}: V \to \mathbb{C}$ such that there exists $\alpha:V \to \mathbb{R}$ satisfying
		\[
		\Im{\frac{\dot{z}_{j} - \dot{z}_i}{z_{j}-z_i}} = \frac{\langle \dot{z}_{j}-\dot{z}_i, i(z_{j} - z_i) \rangle}{|z_{j}-z_i|^2} = \frac{\alpha_i+\alpha_{j}}{2}.
		\]
		We call $\alpha$ the \emph{angular velocities} at vertices.
	\end{definition}
	
	\begin{example}
		The infinitesimal deformations $\dot{z}:= a z^2 + bz + c$, where $a,b,c \in \mathbb{C}$ are constants, are both conformal and pattern deformations since
		\[
		\frac{\dot{z}_{j} - \dot{z}_i}{z_{j}-z_i} =(az_i + b/2) + (az_{j} + b/2).
		\]
	\end{example}
	
	Infinitesimal conformal deformations and infinitesimal pattern deformations are closely related:
	
	\begin{theorem}
		Suppose $z:V \to \mathbb{C}$ is a realization of a triangular mesh. Then an infinitesimal deformation $\dot{z}:V \to \mathbb{C}$ is conformal if and only if $i \dot{z}$ is a pattern deformation. 
	\end{theorem}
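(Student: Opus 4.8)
The plan is to reduce the statement to a single elementary fact about complex numbers: multiplication by $i$ interchanges real and imaginary parts, so that $\Im(i\zeta) = \Re(\zeta)$ for every $\zeta \in \mathbb{C}$. Since the two Definitions \ref{def:infcon} and \ref{def:infpatt} differ only in whether one reads off the real or the imaginary part of the \emph{same} edge quantity $\frac{\dot{z}_{j}-\dot{z}_i}{z_{j}-z_i}$, applying this fact to $\dot{w}:=i\dot{z}$ should turn the conformality condition literally into the pattern condition.

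First I would set $\dot{w}:=i\dot{z}$ and note that, because the deformation acts on $\dot{z}$ while the underlying realization $z$ is unchanged, for every edge $\{ij\}$ one has $\dot{w}_{j}-\dot{w}_i = i(\dot{z}_{j}-\dot{z}_i)$ and hence
\[
\frac{\dot{w}_{j}-\dot{w}_i}{z_{j}-z_i} = i\,\frac{\dot{z}_{j}-\dot{z}_i}{z_{j}-z_i}.
\]
Taking imaginary parts and using $\Im(i\zeta)=\Re(\zeta)$ gives
\[
\Im\frac{\dot{w}_{j}-\dot{w}_i}{z_{j}-z_i} = \Re\frac{\dot{z}_{j}-\dot{z}_i}{z_{j}-z_i}
\]
for every edge. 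Comparing with the two definitions, the right-hand side equals $\frac{u_i+u_{j}}{2}$ precisely when $\dot{z}$ is conformal with scale change $u$, while the left-hand side equals $\frac{\alpha_i+\alpha_{j}}{2}$ precisely when $\dot{w}=i\dot{z}$ is a pattern deformation with angular velocities $\alpha$.

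It then follows that $\dot{z}$ is conformal if and only if $i\dot{z}$ is a pattern deformation, with the correspondence between the auxiliary functions being simply $\alpha=u$; both implications are obtained at once since this is an identity of edge quantities valid for the same $z$. I do not expect any genuine obstacle here: the only point requiring a moment's care is to confirm that the denominator $z_{j}-z_i$ is identical in both conditions, which holds because only $\dot{z}$, and not $z$, is multiplied by $i$.
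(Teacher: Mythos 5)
Your proposal is correct and is essentially the paper's own argument: the paper verifies the identity $\langle \dot{z}_{j}-\dot{z}_i, z_{j}-z_i\rangle = \langle i\dot{z}_{j}-i\dot{z}_i, i(z_{j}-z_i)\rangle$ and reads off both definitions, which is exactly your observation $\Im(i\zeta)=\Re(\zeta)$ applied to the edge quantity $\frac{\dot{z}_{j}-\dot{z}_i}{z_{j}-z_i}$, with the same correspondence $\alpha = u$. The only difference is notational (real inner product versus real/imaginary parts), not mathematical.
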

	\begin{proof}
		Notice
		\begin{align*}
			\frac{\langle \dot{z}_{j}-\dot{z}_i, z_{j} - z_i \rangle}{|z_{j}-z_i|^2} =  \frac{\langle i\dot{z}_{j}- i\dot{z}_i, i(z_{j} - z_i) \rangle}{|z_{j}-z_i|^2}.
		\end{align*}
		and the claim follows from Definition \ref{def:infcon} and \ref{def:infpatt}.  
	\end{proof}

	\subsection{Infinitesimal deformations of a triangle}
	Let $z:V \to \mathbb{C}$ be a realization of a triangulated mesh and $\dot{z}$ an infinitesimal deformation. Up to an infinitesimal translation $\dot{z}$ is completely determined by the infinitesimal scalings and rotations that it induces on each edge. These infinitesimal scalings and rotations of edges satisfy certain compatibility conditions on each triangle. These conditions involve the cotangent coefficients well known from the theory of discrete Laplacians. As we will see in section \ref{sec:harmonic}, for conformal deformations (as well as for pattern deformations) the infinitesimal scalings and rotations of edges are indeed discrete harmonic functions.
	
	Consider three pairwise distinct points $z_1,z_2,z_3 \in \mathbb{C}$ that do not lie on a line. In the following $i,j,k$ denotes any cyclic permutation of the indexes $1,2,3$. The triangle angle at the vertex $i$ is denoted by $\beta_i$. We adopt the convention that all $\beta_1,\beta_2,\beta_3$ have positive sign if the triangle $z_1,z_2,z_3$ is positively oriented and a negative sign otherwise. Suppose we have an infinitesimal deformation of this triangle. Then there exists $\sigma_{ij},\omega_{ij} \in \mathbb{R}$ such that
	\begin{equation}
		\label{eq:zdot}
		\dot{z}_{j} - \dot{z}_i = (\sigma_{ij} + i\omega_{ij}) (z_{j} -z_i).
	\end{equation}
	The scalars $\sigma_{ij}$ and $\omega_{ij}$ describe the infinitesimal scalings and rotations of the edges. They satisfy the following compatibility conditions:
	
	\begin{lemma} \label{lem:infindeform}
		
		Given $\sigma_{ij},\omega_{ij} \in \mathbb{R}$ the following statements are equivalent:
		
		(a) There exist $\dot{z}_i$ such that (\ref{eq:zdot}) holds.
		
		(b) We have		\begin{equation} \label{eq:closedtri}
			0 = (\sigma_{12} + i\omega_{12})(z_2 - z_1) + (\sigma_{23} + i\omega_{23})(z_3 - z_2) +(\sigma_{31} + i\omega_{31})(z_1 -z_3).
		\end{equation}
		
		(c) There exists $\omega \in \mathbb{R}$ such that
		\begin{align*}
			i\omega &= i\omega_{23} + i \cot \beta_1 (\sigma_{31} - \sigma_{12}) \nonumber\\
			&= i\omega_{31} + i \cot \beta_2 (\sigma_{12} - \sigma_{23}) \\
			&= i\omega_{12} + i \cot \beta_3 (\sigma_{23} - \sigma_{31}) \nonumber.
		\end{align*}
		
		(d) There exist $\sigma \in \mathbb{R}$ such that			\begin{align*}
			\sigma &= \sigma_{23} + i \cot \beta_1 (i\omega_{31} - i\omega_{12}) \nonumber\\
			&= \sigma_{31} + i \cot \beta_2 (i\omega_{12} - i\omega_{23}) \\
			&= \sigma_{12} + i \cot \beta_3 (i\omega_{23} - i\omega_{31}) \nonumber.
		\end{align*}
		
	\end{lemma}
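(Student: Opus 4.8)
The plan is to separate the elementary equivalence (a)$\Leftrightarrow$(b) from the analytic core (b)$\Leftrightarrow$(c)$\Leftrightarrow$(d), and to treat the core by exploiting the fact that (\ref{eq:closedtri}) is exactly the condition for $\dot z$ to be the restriction of a real-affine map of the plane. I would first dispose of (a)$\Leftrightarrow$(b): summing (\ref{eq:zdot}) over the three oriented edges $12,23,31$ makes the left-hand side telescope to $(\dot z_2-\dot z_1)+(\dot z_3-\dot z_2)+(\dot z_1-\dot z_3)=0$, which is precisely (\ref{eq:closedtri}); conversely, given (\ref{eq:closedtri}) one fixes $\dot z_1$ arbitrarily, defines $\dot z_2,\dot z_3$ by integrating (\ref{eq:zdot}) along the edges $12$ and $23$, and the closedness condition guarantees that the value prescribed on the edge $31$ is then automatically consistent.

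Next I would record the key reformulation of (b). Writing $e_{ij}:=z_j-z_i$ and $a_{ij}:=\sigma_{ij}+i\omega_{ij}$, any deformation coming from genuine $\dot z_i$ is the restriction of a map $w\mapsto \mu w+\nu\bar w+c$, so that
\[
a_{ij}=\mu+\nu\,\frac{\overline{e_{ij}}}{e_{ij}}=\mu+\nu\, e^{-2i\theta_{ij}},\qquad \theta_{ij}:=\arg e_{ij}.
\]
Such an $a$ always solves (b), since $e^{-2i\theta_{ij}}e_{ij}=\overline{e_{ij}}$ gives $\sum a_{ij}e_{ij}=\mu\sum e_{ij}+\nu\sum\overline{e_{ij}}=0$. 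Because the triangle is nondegenerate the directions $\theta_{12},\theta_{23},\theta_{31}$ are pairwise distinct modulo $\pi$, so $(\mu,\nu)\mapsto(a_{12},a_{23},a_{31})$ is injective, and its $4$-dimensional image lies in the solution set of (b). As (b) is the kernel of the surjective real-linear map $(\sigma_{ij},\omega_{ij})\mapsto\sum a_{ij}e_{ij}\in\mathbb{C}$, that kernel is also $4$-dimensional, so the image fills it out: (b) is equivalent to the existence of such $\mu,\nu$.

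With this in hand I would prove (b)$\Rightarrow$(c) by direct substitution. Let $R_1,R_2,R_3$ denote the three real quantities obtained from the right-hand sides of (c) after cancelling the common factor $i$, e.g. $R_1=\omega_{23}+\cot\beta_1(\sigma_{31}-\sigma_{12})$. Using $\sigma_{ij}=\Re a_{ij}$, $\omega_{ij}=\Im a_{ij}$ and $\Im w=\Re(-iw)$ one finds $R_k=\Im\mu+\Re(\nu B_k)$, where
\[
B_1=-i\,e^{-2i\theta_{23}}+\cot\beta_1\bigl(e^{-2i\theta_{31}}-e^{-2i\theta_{12}}\bigr)
\]
and $B_2,B_3$ arise by cyclic permutation. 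Everything then reduces to the identity $B_1=B_2=B_3$. To establish it I would insert the turning-angle relations $\theta_{23}=\theta_{12}+\pi-\beta_2$ and $\theta_{31}=\theta_{12}+\pi+\beta_1$ (valid for a positively oriented triangle, the other case being identical under the sign convention for the $\beta_i$, and using $\beta_1+\beta_2+\beta_3=\pi$), after which each $B_k$ collapses to the common value $-i\,e^{-2i\theta_{12}}\bigl(1+e^{2i\beta_2}+e^{-2i\beta_1}\bigr)$. The simplification rests on the elementary identity $2\cos\beta\,e^{-i\beta}=1+e^{-2i\beta}$, which turns $\cot\beta\,(e^{-2i\beta}-1)$ into $-i(1+e^{-2i\beta})$. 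This single trigonometric collapse is the one genuinely computational step and the main obstacle; it yields $R_1=R_2=R_3=:\omega\in\mathbb{R}$, which is (c).

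For the converse (c)$\Rightarrow$(b) I would argue by dimension. The conditions in (c) are $R_1-R_2=0$ and $R_2-R_3=0$; as linear functionals on $\mathbb{R}^6=\{(\sigma_{ij},\omega_{ij})\}$ they are independent, their $\omega$-parts being $(1,-1,0)$ and $(0,1,-1)$ in the coordinates $(\omega_{23},\omega_{31},\omega_{12})$, so the solution set of (c) is again a $4$-dimensional linear subspace. Since every solution of (b) satisfies (c) and both are $4$-dimensional, they coincide, proving (b)$\Leftrightarrow$(c). Finally (b)$\Leftrightarrow$(d) comes for free from the substitution $a_{ij}\mapsto i\,a_{ij}$, i.e. $(\sigma_{ij},\omega_{ij})\mapsto(-\omega_{ij},\sigma_{ij})$: it multiplies (\ref{eq:closedtri}) by $i$ and hence preserves (b), while it carries each expression of (c) verbatim into the corresponding expression of (d). Applying the already-established equivalence (b)$\Leftrightarrow$(c) to the transformed data then gives (b)$\Leftrightarrow$(d), closing the cycle.
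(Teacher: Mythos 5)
Your proof is correct, but it takes a genuinely different route from the paper's. The paper proves (b)$\Leftrightarrow$(c) in one stroke: it expands each edge vector in terms of the rotated other two via the identity $z_3-z_2=\cot(\beta_3)\,i(z_2-z_1)-\cot(\beta_2)\,i(z_1-z_3)$ and its cyclic permutations, substitutes into \eqref{eq:closedtri}, and collects coefficients; since $\lambda_1\,i(z_3-z_2)+\lambda_2\,i(z_1-z_3)+\lambda_3\,i(z_2-z_1)=0$ with real $\lambda$'s forces $\lambda_1=\lambda_2=\lambda_3$, the three coefficients must coincide, which is exactly (c) -- and this one computation yields both implications simultaneously, while the decomposition identity itself is reused later (Lemma \ref{lem:hessian}). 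You instead characterize the solution space of (b) as the restrictions of real-affine maps $w\mapsto\mu w+\nu\bar w+c$, matching its $4$-dimensional image against the $4$-dimensional kernel of $(\sigma,\omega)\mapsto\sum a_{ij}e_{ij}$, then verify (c) on this parametrization by the trigonometric collapse $B_1=B_2=B_3$ (which I checked, including your reduction of the orientation-reversed case through the sign convention for the $\beta_i$), and recover (c)$\Rightarrow$(b) by a second dimension count. What your approach buys: the $(\mu,\nu)$-parametrization is a faithful discrete analogue of splitting a deformation into its $f_z$ and $f_{\bar z}$ parts, which is conceptually illuminating, and your derivation of (d) via the substitution $(\sigma_{ij},\omega_{ij})\mapsto(-\omega_{ij},\sigma_{ij})$ -- which multiplies \eqref{eq:closedtri} by $i$ and carries (c) verbatim into (d) -- is cleaner and more explicit than the paper's ``similar fashion by eliminating $i(z_j-z_i)$''. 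What it costs: you need two separate dimension-counting arguments where the paper's coefficient identity gives the equivalence outright, and the turning-angle computation is heavier than the paper's one-line appeal to linear independence of the rotated edge vectors.
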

	
	\begin{proof}
		The relation between (a) and (b) is obvious. We show the equivalence between (b) and (c). With $A$ denoting the signed triangle area we have the following identities:
		\begin{align*}
			0&=\langle i(z_{j}- z_i),z_{j}- z_i \rangle ,\\
			2 A &=\langle i(z_{j}- z_i),z_k- z_{j} \rangle ,\\ 
			\langle i(z_{j}- z_i),i(z_{j}- z_i) \rangle &= \langle z_{j}- z_i,z_{j}- z_i \rangle.
		\end{align*}
		Using these identities and $z_3 - z_2 \in \mbox{span}_\mathbb{R}\{i(z_1 - z_3),i(z_2 -z_1)\}$ we obtain		\begin{align}
			\label{eq:edgedecom} z_3 - z_2 &= \cot(\beta_3) i(z_2 -z_1)-\cot(\beta_2) i(z_1 - z_3) .
		\end{align}
		Cyclic permutation yields
		\begin{align*}
			z_1 - z_3 &= \cot(\beta_1) i(z_3 - z_2) - \cot(\beta_3) i(z_2 - z_1) ,  \\
			z_2 - z_1 &= \cot(\beta_2) i(z_1 - z_3) - \cot(\beta_1) i(z_3 - z_2) .
		\end{align*}
		Substituting these identities into Equation \eqref{eq:closedtri} we obtain
		\begin{align*}
			0 =& \;\phantom{+} \sigma_1 \big(\cot(\beta_3) i(z_2 - z_1)-\cot(\beta_2) i(z_1 - z_3)\big)  + \omega_{23} i(z_3 - z_2)\\
			&+ \sigma_2  \big(\cot(\beta_1) i(z_3 - z_2) - \cot(\beta_3) i(z_2 - z_1)\big) + \omega_{31} i(z_1 - z_3)\\
			&+ \sigma_3 \big(\cot(\beta_2) i(z_1 - z_3) - \cot(\beta_1) i(z_3 - z_2)\big) + \omega_{12} i(z_2 - z_1)\\
			=& \; \phantom{+} \big(\omega_1+\cot \beta_1 (\sigma_2 - \sigma_3)\big) i(z_3 - z_2)\\
			&+\big(\omega_2+\cot \beta_2(\sigma_3 - \sigma_1)\big) i(z_1 - z_3)\\
			&+\big(\omega_3+\cot \beta_3 (\sigma_1 - \sigma_2)\big) i(z_2 - z_1).
		\end{align*}
		Now we use that $\lambda_1,\lambda_2,\lambda_3 \in \mathbb{C}$ satisfy 
		\[
		\lambda_1 i(z_3 - z_2)+\lambda_2 i(z_1 - z_3)+\lambda_3 i(z_2 - z_1)=0,
		\]
		if and only if $\lambda_1=\lambda_2=\lambda_3$. This establishes the equivalence of (b) and (c). The equivalence of (b) and (d) is seen in a similar fashion by eliminating $i(z_{j}-z_i)$ in \eqref{eq:closedtri} instead of $(z_{j}-z_i)$.
		 
	\end{proof}
	
	The quantity $\omega$ above describes the average rotation speed of the triangle. Similarly, it can be verified that the above $\sigma$ satisfies
	\[
	\sigma = \frac{\dot{R}}{R}
	\]    
	where $R$ denotes the circumradius of the triangle. Thus $\sigma$ signifies an average scaling of the triangle.
	
	\subsection{Harmonic functions with respect to the cotangent Laplacian}
	\label{sec:harmonic}
	
	In smooth complex analysis conformal maps are closely related to harmonic functions. If a conformal map preserves orientation it is holomorphic and satisfies the Cauchy Riemann equations. In particular, its real part and the imaginary part are conjugate harmonic functions. Conversely, given a harmonic function on a simply connected surface then it is the real part of some conformal map.
	
	A similar relationship manifests between discrete harmonic functions (in the sense of the cotangent Laplacian) and infinitesimal deformations of triangular meshes. Discrete harmonic functions can be regarded as the real part of holomorphic functions which satisfies a discrete analogue of the Cauchy Riemann equations. In particular, a relation between discrete harmonic functions and infinitesimal pattern deformations was found by Bobenko, Mercat and Suris \cite{Bobenko2005}. Integrable systems were involved in this context. We extend their result to include the case of infinitesimal conformal deformations.
	
	\begin{theorem}
		Let $z:V \to \mathbb{C}$ be a simply connected triangular mesh realized in the complex plane and $h: V \to \mathbb{R}$ be a function. Then the following are equivalent:
		
		(a) $h$ is a harmonic function with respect to the cotangent Laplacian, i.e. using the notation of Figure \ref{fig:orientation}, for all interior vertices $i\in V_{int}$ we have
		
		\begin{equation} \label{eq:cotangent} \sum_j (\cot \beta_{ij}^k + \cot \beta_{ji}^l) (h_{j} - h_i) =0. \end{equation}
		
		(b) There exists an infinitesimal conformal deformation $\dot{z}:V \to \mathbb{C}$ with scale factors given by $h$. It is unique up to infinitesimal rotations and translations.
		
		(c) There exists an infinitesimal pattern deformation $i\dot{z}:V \to \mathbb{C}$ with $h$ as angular velocities. It is unique up to infinitesimal scalings and translations.
	\end{theorem}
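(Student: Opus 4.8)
The plan is to dispose of the equivalence (b)$\Leftrightarrow$(c) immediately and then to prove (a)$\Leftrightarrow$(b) by a two-stage integration whose single solvability obstruction turns out to be the cotangent-Laplace equation.

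For (b)$\Leftrightarrow$(c) I would simply invoke the preceding theorem that $\dot z$ is conformal precisely when $i\dot z$ is a pattern deformation, together with the identity $\Re\frac{\dot z_j-\dot z_i}{z_j-z_i}=\Im\frac{i\dot z_j-i\dot z_i}{z_j-z_i}$, which shows that the scale factors of $\dot z$ are literally the angular velocities of $i\dot z$. Thus a conformal $\dot z$ with scale factors $h$ corresponds bijectively to a pattern deformation $i\dot z$ with angular velocities $h$. The uniqueness clauses match as well: multiplication by $i$ carries an infinitesimal rotation $z\mapsto icz$ into the infinitesimal scaling $z\mapsto -cz$ and fixes translations, so ``rotations and translations'' for $\dot z$ becomes ``scalings and translations'' for $i\dot z$.

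For (a)$\Leftrightarrow$(b), given $h$ I set $\sigma_{ij}=(h_i+h_j)/2$ and look for $\dot z$ with $\Re\frac{\dot z_j-\dot z_i}{z_j-z_i}=\sigma_{ij}$. Writing $\dot z_j-\dot z_i=(\sigma_{ij}+i\omega_{ij})(z_j-z_i)$, the problem becomes: find single-valued edge rotations $\omega_{ij}$ making this discrete (antisymmetric) $1$-form closed on every triangle; since $M$ is simply connected, such a closed form integrates to the desired $\dot z$. By Lemma \ref{lem:infindeform}, closedness on a triangle is equivalent to the existence of a common ``triangle rotation'' $\omega_T$ with, for the edge $\{ij\}$ opposite the vertex $k$, $\omega_{ij}=\omega_T-\cot\beta^k_{ij}\,(\sigma_{jk}-\sigma_{ki})$. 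Because $\sigma$ is already fixed by $h$, this pins down the three pairwise differences of the $\omega_{ij}$ on each triangle and leaves only the one real constant $\omega_T$ free per face.

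The heart of the argument is then the gluing of these per-face constants. Requiring $\omega_{ij}$ to be single-valued across an interior edge shared by $T$ and $T'$ reads $\omega_T-\omega_{T'}=\gamma(e,T)-\gamma(e,T')$, a jump prescribed by $h$; solving for $\{\omega_T\}$ amounts to integrating a $1$-form on the dual graph, which (again by simple connectivity) is possible if and only if these jumps sum to zero around every interior vertex. Here the key computation enters: since $\sigma_{ab}=(h_a+h_b)/2$, each term $\cot\beta^k_{ij}(\sigma_{jk}-\sigma_{ki})$ collapses to $\tfrac12\cot\beta^k_{ij}(h_j-h_i)$, and adding the two contributions of the edge $\{ij\}$ produces exactly $\tfrac12(\cot\beta^k_{ij}+\cot\beta^l_{ji})(h_j-h_i)$. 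Summing around an interior vertex $i$ therefore reproduces, up to a nonzero factor, the left-hand side of \eqref{eq:cotangent}, so the solvability condition is precisely harmonicity of $h$. Finally, when solvable, $\{\omega_T\}$ is determined up to a single global additive constant $c$; adding $c$ to every $\omega_{ij}$ changes $\dot z$ by $icz$ plus the integration constant, that is, by an infinitesimal rotation and a translation, which yields the asserted uniqueness. I expect the only real difficulty to be the orientation and sign bookkeeping in this last computation: matching the opposite angles $\beta^k_{ij},\beta^l_{ji}$ to the correct triangles and orienting the telescoping sums on both the primal (triangle closedness) and dual (vertex-loop) levels so that the loop integral around an interior vertex lands exactly on the cotangent Laplacian.
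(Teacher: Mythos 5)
Your proposal is correct and takes essentially the same route as the paper: Lemma \ref{lem:infindeform} reduces everything to per-face rotation constants (your $\omega_T$ is exactly the paper's conjugate harmonic function $\tilde{\omega}$), gluing them across interior edges is a dual $1$-form integration problem whose sole obstruction, by simple connectivity, is the cotangent-Laplace equation \eqref{eq:cotangent}, and the leftover additive constant $c$ accounts for uniqueness up to $z \mapsto icz + b$. The only (cosmetic) differences are that you get (b)$\Leftrightarrow$(c) directly from the preceding theorem on multiplication by $i$ where the paper says the third case ``follows similarly,'' and that you carry the factor $\tfrac12$ in $\cot\beta^k_{ij}(\sigma_{jk}-\sigma_{ki})=\tfrac12\cot\beta^k_{ij}(h_j-h_i)$ correctly, a factor the paper's displayed formulas for $\tilde{\omega}$ drop (harmlessly, since it only rescales the conjugate function).
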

	\begin{proof}
		We show the equivalence of the first two statements. The equivalence of the first and the third follows similarly.
		
		Suppose $h$ is a harmonic function. Since the triangular mesh is simply connected, equation \eqref{eq:cotangent} implies the existence of a function $\tilde{\omega}: F \to \mathbb{R}$ such that for all interior edges $\{ij\}$ we have
		\[
		i\tilde{\omega}_{ijk}- i\tilde{\omega}_{jil} = i(\cot \beta_{ij}^k + \cot \beta_{ji}^l) (h_{j} - h_i).  
		\]
		Here $\tilde{\omega}$ is unique up to an additive constant and called the \emph{conjugate harmonic function} of $h$.
		Using $\tilde{\omega}$ we define a function $\omega:E \to \mathbb{R}$ via
		\[
		i\omega_{ij} = i\tilde{\omega}_{ijk} - i\cot \beta_{ij}^k (h_{j} - h_i).
		\]
		Lemma \ref{lem:infindeform} now implies that there exists $\dot{z}: V \to \mathbb{C}$ such that
		\[
		(\dot{z}_{j} -\dot{z}_i) = \left(\frac{h_i + h_{j}}{2} + i\omega_{ij} \right) (z_{j} -z_i).
		\]
		This gives us the desired infinitesimal conformal deformation of $z$ with $h$ as scale factors.
		
		To show uniqueness, suppose $\dot{z}, \dot{z}'$ are infinitesimal conformal deformations with the same scale factors. Then $\dot{z} - \dot{z}'$ preserves all the edge lengths of the triangular mesh and hence is induced from an Euclidean transformation.

		Conversely, given an infinitesimal conformal deformation $\dot{z}$ with scale factors $h$. We write
		\[
		\dot{z}_{j} - \dot{z}_i = \left(\frac{h_i + h_{j}}{2} + i\omega_{ij} \right) (z_{j} - z_i)
		\]
		for some $\omega:E \to \mathbb{R}$. Lemma \ref{lem:infindeform} implies that there is a function $\tilde{\omega}: F \to \mathbb{R}$ such that
		\[
		i\tilde{\omega}_{ijk}=i\omega_{ij} + i\cot \beta_{ij}^k (h_{j} - h_i).
		\]
		We have
		\[
		i\tilde{\omega}_{ijk}- i\tilde{\omega}_{jil} =i (\cot \beta_{ij}^k + \cot \beta_{ji}^l) (h_{j} - h_i)
		\]
		and
		\[
		\sum_j (\cot \beta_{ij}^k + \cot \beta_{ji}^l) (h_{j} - h_i) =0. 
		\]
		Therefore $h$ is harmonic. 
	\end{proof}
	
	\section{Holomorphic quadratic differentials} \label{sec:holomorphic1forms}
	In this section, we introduce a discrete analogue of holomorphic quadratic differentials. We illustrate their correspondence to discrete harmonic functions. It reflects the property in the smooth theory that holomorphic quadratic differentials parametrize M\"{o}bius structures on Riemann surfaces (Ch. 9, \cite{Gunning1966}). 
	
	To simplify the notation, we make use of discrete differential forms . We denote $\vv{E}$ the set of oriented edges and $\vv{E}_{int}$ the set of oriented interior edges. Given an oriented triangular mesh $M$, a complex-valued function $\eta:\vv{E}\to \mathbb{C}$ is called a \emph{discrete 1-form} if
	\[
	\eta(e_{ij}) = -\eta(e_{ji}) \quad \forall e_{ij} \in \vv{E}.
	\]
	It is \emph{closed} if for every face $\{ijk\}$	\[
	\eta(e_{ij}) + \eta(e_{jk}) +\eta(e_{ki}) =0.
	\]
	It is \emph{exact} if there exists a function $f:V \to \mathbb{C}$ such that
	\[
	\eta(e_{ij}) = df(e_{ij}) := f_{j} -f_i.
	\]
	Similarly, we can consider discrete 1-forms on the dual graph $M^*$ of $M$ and these are called \emph{dual 1-forms}. Given an oriented edge $e$, we denote $e^*$ its dual edge oriented from the right face of $e$ to its left face. The set of oriented dual edges is denoted by $\vv{E}^*$.
	
	\begin{definition}
		Given a triangular mesh $z:V \to \mathbb{C}$ realized on the complex plane, a function $q:E_{int} \to i\mathbb{R}$ defined on interior edges is a \emph{discrete holomorphic quadratic differential} if it satisfies for every interior vertex $i \in V_{int}$
		\begin{gather*}
			\sum_{j} q_{ij} = 0, \\
			\sum_{j} q_{ij}/ dz(e_{ij}) = 0.
		\end{gather*}
	\end{definition}
	
	\begin{theorem}
		Let $q:E_{int} \to i\mathbb{R}$ be a holomorphic quadratic differential on a realization $z:V \to \mathbb{C}$ of a triangular mesh. Suppose $\Phi:\overline{\mathbb{C}} \to \overline{\mathbb{C}}$ is a M\"{o}bius transformation which does not map any vertex to infinity. Then $q$ is again a holomorphic quadratic differential on $w:= \Phi \circ z$.
	\end{theorem}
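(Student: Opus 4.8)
The plan is to exploit that the two defining conditions for a discrete holomorphic quadratic differential behave very differently under $\Phi$. Since $q$ itself is left completely unchanged and still takes values in $i\mathbb{R}$, the first condition $\sum_j q_{ij} = 0$ at each interior vertex $i$ continues to hold verbatim, as it involves only $q$ and the combinatorics. Thus everything reduces to the second condition, namely $\sum_j q_{ij}/dw(e_{ij}) = 0$ with $dw(e_{ij}) = w_j - w_i$. Because every Möbius transformation is a composition of affine maps $z \mapsto az+b$ and the inversion $z \mapsto 1/z$, and because $\Phi$ maps no vertex to infinity, I would verify the claim for these two generators and then assemble the general case by composition.

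The affine case is immediate: for $\Phi(z)=az+b$ one has $dw(e_{ij}) = a\,dz(e_{ij})$, whence $\sum_j q_{ij}/dw(e_{ij}) = a^{-1}\sum_j q_{ij}/dz(e_{ij}) = 0$. The substance lies entirely in the inversion $\Phi(z)=1/z$. Here $w_i = 1/z_i$ (finite, since no vertex is sent to $\infty$, i.e.\ no $z_i = 0$), and a short computation gives
\[
dw(e_{ij}) = \frac{1}{z_j} - \frac{1}{z_i} = -\frac{z_j - z_i}{z_i z_j} = -\frac{dz(e_{ij})}{z_i z_j}.
\]
Consequently $q_{ij}/dw(e_{ij}) = -z_i z_j\, q_{ij}/dz(e_{ij})$, so that the sum to be controlled is $-z_i \sum_j z_j\, q_{ij}/dz(e_{ij})$.

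The key algebraic move is the splitting $z_j = (z_j - z_i) + z_i = dz(e_{ij}) + z_i$, which rewrites the offending factor $z_j$ as a combination of exactly the two expressions governed by the hypotheses on $z$:
\[
\sum_j z_j \frac{q_{ij}}{dz(e_{ij})} = \sum_j q_{ij} + z_i \sum_j \frac{q_{ij}}{dz(e_{ij})} = 0 + z_i \cdot 0 = 0.
\]
Hence $\sum_j q_{ij}/dw(e_{ij}) = 0$, settling the inversion case. For a general $\Phi$ with nonzero lower-left coefficient I would write $\Phi = A_2 \circ (z\mapsto 1/z) \circ A_1$ with $A_1,A_2$ affine and apply the two cases in turn. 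The main point to watch, and the only genuinely delicate one, is that the intermediate realizations stay admissible: the single vertex that the inner inversion could push to $\infty$ is precisely the preimage of the pole of $\Phi$, so the hypothesis that $\Phi$ sends no vertex to $\infty$ is exactly what keeps every step finite. Apart from this bookkeeping, the computational heart is the two-line identity above.
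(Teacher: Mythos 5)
Your proof is correct and takes essentially the same approach as the paper: reduce to the inversion $z \mapsto 1/z$ (affine maps being trivial), and your splitting $z_j = dz(e_{ij}) + z_i$ is exactly the paper's one-line computation $\sum_j q_{ij}/dw(e_{ij}) = \sum_j -z_i z_j\, q_{ij}/dz(e_{ij}) = -z_i \sum_j q_{ij} - z_i^2 \sum_j q_{ij}/dz(e_{ij}) = 0$. Your extra bookkeeping about intermediate realizations (the only vertex the inner inversion could send to $\infty$ being the preimage of the pole of $\Phi$) is a point the paper leaves implicit, but it does not change the argument.
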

	\begin{proof}
		Since M\"{o}bius transformations are generated by Euclidean transformations and inversions, it suffices to consider the inversion in the unit circle at the origin
		\[
		w:=\Phi(z) = 1/z.
		\]
		We have
		\begin{align*}
			\sum_{j} q_{ij}/ dw(e_{ij}) = \sum_{j} -z_i z_{j} q_{ij}/ dz(e_{ij}) = -z_i \sum_{j} q_{ij} - z_i^2 \sum_{j} q_{ij} /dz(e_{ij}) =0.
		\end{align*}
		Hence the claims follow.  
	\end{proof}
	
	We are going to show that on a simply connected triangular mesh, there is a correspondence between discrete holomorphic quadratic differentials and discrete harmonic functions.
	
	We first show how to construct a discrete holomorphic quadratic differential from a harmonic function. Given a function $u:V \to \mathbb{R}$ on a realization of $z:V \to \mathbb{C}$ of a triangular mesh $M$. If we interpolate it piecewise-linearly over each triangular face, its gradient is constant on each face and we have $\grad_z u:F \to \mathbb{C}$ given by
	\[
	\grad_z u_{ijk} =  i \frac{u_i dz(e_{jk})+ u_{j} dz(e_{ki})+u_k dz(e_{ij})}{2 A_{ijk}}.
	\]
	Note that we ignore here the non-generic case (which leads to the vanishing of the area) where the triangle degenerates in the sense that its circumcircle passes through the point at infinity. Also note that for a non-degenerate triangle that is mapped by $z$ in $\mathbb{C}$ in an orientation reversing fashion the area $A_{ijk}$ is considered to have a negative sign. Granted this, one can verify that the gradient of $u$ satisfies
	\[
	\langle \grad_z u_{ijk}, dz(e_{ij}) \rangle = u_{j} - u_i  \quad \forall \{ij\} \subset \{ijk\} \in F.
	\]
	We define $u_z : F \to \mathbb{C}$ by
	\[
	u_z := \frac{1}{2} \overline{\grad_z u}.
	\]
	and the dual 1-form $du_z: \vv{E}^*_{int} \to \mathbb{C}$ on $M$ by
	\[
	du_z(e^*_{ij}) := (u_z)_{ijk} - (u_z)_{jil}
	\]
	where $\{ijk\}$ is the left face and $\{jil\}$ is the right face of the oriented edge $e_{ij}$.
	\begin{lemma}\label{lem:hessian}
		Given a function $u:V \to \mathbb{R}$ on a realization of a triangular mesh $z:V \to \mathbb{C}$, we have
		\begin{align*}
			& du_z(e^*_{ij}) dz(e_{ij}) \\ =& \frac{-i}{2}\big(\cot \beta^{i}_{jk} (u_k -u_{j})+\cot \beta^{j}_{ki} (u_k -u_i)+\cot \beta^{j}_{il} (u_l -u_i)+\cot \beta^{i}_{lj} (u_l -u_{j})\big).
		\end{align*}
		which is purely imaginary (Figure \ref{fig:orientation}).
	\end{lemma}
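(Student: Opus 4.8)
The plan is to treat the two faces $\{ijk\}$ and $\{jil\}$ adjacent to the edge $e_{ij}$ separately, compute $(u_z)_{ijk}\,dz(e_{ij})$ and $(u_z)_{jil}\,dz(e_{ij})$ in closed form, and then subtract. Since $u_z = \tfrac12\overline{\grad_z u}$, the basic object on a single face is a product of the form $\overline{g}\,dz(e_{ij})$ with $g := \grad_z u_{ijk}$. I will split this product into its real and imaginary parts using the elementary identity $\overline{a}\,b = \langle a,b\rangle + i\,\langle ia,b\rangle$ for $a,b \in \mathbb{C}$, where $\langle\cdot,\cdot\rangle$ is the pairing introduced above. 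The real part is immediate: by the defining property $\langle \grad_z u_{ijk}, dz(e_{ij})\rangle = u_j - u_i$ of the gradient, it equals $\tfrac12(u_j - u_i)$. The entire content of the lemma therefore lies in the imaginary part $\tfrac{i}{2}\,\langle i\,\grad_z u_{ijk}, dz(e_{ij})\rangle$, i.e. in the component of the gradient orthogonal to the edge.

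To evaluate this orthogonal component I would first rewrite the gradient relative to the vertex $i$: setting $e_1 := dz(e_{ij})$, $e_2 := dz(e_{ik})$, $a := u_j - u_i$, $b := u_k - u_i$, the numerator in the definition of $\grad_z u_{ijk}$ collapses (the $u_i$-terms sum to zero, since the three edge vectors of a triangle sum to zero) to give $\grad_z u_{ijk} = \tfrac{i}{2A_{ijk}}(b\,e_1 - a\,e_2)$. A short computation of $\langle i\,\grad_z u_{ijk}, e_1\rangle = \Im(\overline{\grad_z u_{ijk}}\,e_1)$ then expresses it through the two scalars $\langle e_1,e_2\rangle / (2A_{ijk})$ and $|e_1|^2/(2A_{ijk})$. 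The key step is to recognize these as cotangents of the two angles at the \emph{endpoints} of the shared edge: with the paper's signed conventions one has $\cot\beta^i_{jk} = \langle e_1, e_2\rangle/(2A_{ijk})$ and $\cot\beta^j_{ki} = (|e_1|^2 - \langle e_1,e_2\rangle)/(2A_{ijk})$, the latter obtained by expressing the edges $z_i - z_j$ and $z_k - z_j$ emanating from $j$ in terms of $e_1,e_2$. Substituting yields $\langle i\,\grad_z u_{ijk}, e_1\rangle = -\cot\beta^j_{ki}(u_k - u_i) - \cot\beta^i_{jk}(u_k - u_j)$, which is exactly the pair of cotangent terms attached to the left face.

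The contribution of the right face $\{jil\}$ is obtained by the identical computation after the cyclic relabelling $(i,j,k)\mapsto(j,i,l)$, remembering that $dz(e_{ij}) = -dz(e_{ji})$; this produces the two remaining cotangent terms, carrying the angles $\beta^j_{il}$ and $\beta^i_{lj}$ at $j$ and $i$ in the right triangle. Subtracting the two faces, the real parts $\tfrac12(u_j - u_i)$ cancel exactly, which simultaneously proves that $du_z(e^*_{ij})\,dz(e_{ij})$ is purely imaginary and that the imaginary parts assemble into the claimed four-term cotangent expression. The main obstacle I anticipate is purely bookkeeping: one must fix the signed-area and signed-angle conventions once and apply them consistently to both triangles, whose orientations relative to $e_{ij}$ are opposite, since it is easy to misplace a sign when deciding which cotangent belongs to which vertex. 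The identity $2A_{ijk} = \langle i(z_j - z_i), z_k - z_j\rangle$ together with the edge decomposition \eqref{eq:edgedecom} established in the proof of Lemma \ref{lem:infindeform} provide exactly the sign-consistent cotangent relations needed to keep this under control.
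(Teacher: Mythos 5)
Your proof is correct: the identity $\bar a b = \langle a,b\rangle + i\langle ia,b\rangle$, the closed form $\grad_z u_{ijk} = \frac{i}{2A_{ijk}}(b\,e_1 - a\,e_2)$, the two signed cotangent relations, and the resulting per-face expression $(u_z)_{ijk}\,dz(e_{ij}) = \tfrac12(u_j-u_i) - \tfrac{i}{2}\bigl(\cot\beta^i_{jk}(u_k-u_j) + \cot\beta^j_{ki}(u_k-u_i)\bigr)$ all check out, and the relabelled contribution of the face $\{jil\}$ subtracts to give exactly the four-term formula, with the real parts cancelling. Structurally this is the paper's proof as well: there, too, the real part vanishes because both face gradients have the same projection $u_j-u_i$ onto the shared edge, and the transverse component is what produces the cotangents. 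Where you differ is the mechanism for extracting them. The paper never writes the gradient in coordinates; it computes $\Re\bigl(du_z(e^*_{ij})\,i\,dz(e_{ij})\bigr)$ by applying the edge decomposition \eqref{eq:edgedecom} to rewrite $i\,dz(e_{ij})$ as a cotangent-weighted combination of the remaining edges of each triangle, and then evaluates every pairing by the defining property $\langle \grad_z u_T, dz(e)\rangle = du(e)$. You perform the dual computation: expand the gradient in the edge basis $\{e_1,e_2\}$ and recognize the ratios $\langle e_1,e_2\rangle/(2A_{ijk})$ and $\bigl(|e_1|^2-\langle e_1,e_2\rangle\bigr)/(2A_{ijk})$ as the signed cotangents at $i$ and $j$. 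The paper's route buys less sign bookkeeping, since \eqref{eq:edgedecom} is already established with consistent conventions in the proof of Lemma \ref{lem:infindeform} and treats both faces uniformly; your route is self-contained at the level of a single face and makes explicit how the cotangent weights arise as metric ratios of the triangle, at the cost of verifying the two cotangent identities with signed areas and angles, which is precisely the bookkeeping risk you flagged.
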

	\begin{proof}
		Since \[
		\langle \grad_z u_{ijk}, dz(e_{ij}) \rangle = u_{j} - u_i = \langle \grad_z u_{jkl}, dz(e_{ij}) \rangle,
		\]
		we have
		\[
		\Re (du_z(e^*_{ij}) dz(e_{ij})) =0.
		\]
		On the other hand, using equation \eqref{eq:edgedecom} we get
		\begin{align*}
			&\Re (du_z(e^*_{ij}) idz(e_{ij})) \\
			=& \Re (((u_z)_{ijk} - (u_z)_{jil}) idz(e_{ij})) \\
			=& (\langle \grad_z u_{ijk}, \cot \beta^i_{jk} dz(e_{jk}) - \cot \beta^{j}_{ki} dz(e_{ki}) \rangle \\&+ \langle \grad_z u_{jil}, \cot \beta^{j}_{il} dz(e_{il}) - \cot \beta^i_{lj} dz(e_{lj}) \rangle)/2 \\
			=& \frac{1}{2}\big(\cot \beta^{i}_{jk} (u_k -u_{j})+\cot \beta^{j}_{ki} (u_k -u_i)+\cot \beta^{j}_{il} (u_l -u_i)+\cot \beta^{i}_{lj} (u_l -u_{j})\big).
		\end{align*} 
		Hence the claim follows.  
	\end{proof}
	
	\begin{lemma} \label{lem:harholo}
		Given a realization $z:V \to \mathbb{C}$ of a triangular mesh. A function $u:V \to \mathbb{R}$ is harmonic if and only if the function $q:E_{int} \to i\mathbb{R}$ defined by \[
		q_{ij} := du_z(e^*_{ij}) dz(e_{ij})
		\]
		is a holomorphic quadratic differential.
	\end{lemma}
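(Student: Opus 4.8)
The plan is to treat the two defining conditions of a holomorphic quadratic differential separately, observing that one of them is automatic while the other encodes exactly harmonicity. Note first that Lemma~\ref{lem:hessian} already guarantees $q$ takes values in $i\mathbb{R}$, so $q$ is well defined as a candidate quadratic differential; it remains to characterize when the two vertex conditions hold.

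I would dispose of the condition $\sum_j q_{ij}/dz(e_{ij}) = 0$ right away. Since $q_{ij} = du_z(e^*_{ij})\,dz(e_{ij})$ by definition, dividing by $dz(e_{ij})$ simply returns $du_z(e^*_{ij})$. Because $du_z$ is the coboundary of the face function $u_z$, the sum of $du_z(e^*_{ij})$ over the dual edges encircling an interior vertex $i$ is a telescoping sum around a closed dual cycle and hence vanishes identically. Thus this condition holds for every $u$ and carries no information; all the content sits in $\sum_j q_{ij}=0$.

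For that condition I would use Lemma~\ref{lem:hessian} to write $q_{ij} = \tfrac{-i}{2}Q_{ij}$, where $Q_{ij}$ is the real four-term cotangent expression, so that $\sum_j q_{ij}=0$ is equivalent to $\sum_j Q_{ij}=0$. I would then sum $Q_{ij}$ over the neighbors $j$ of $i$ and reorganize the terms by the triangles incident to $i$. Each such triangle contributes along the two of its edges meeting at $i$. Writing the two other vertices of such a triangle as $a$ and $b$, its angle-at-$i$ cotangent multiplies $u_b-u_a$ in one edge's contribution and $u_a-u_b$ in the other's, so these terms cancel in pairs. The survivors are the angle-at-$j$ cotangents; tracking them carefully, they regroup so that for each edge $\{ij\}$ the cotangents of the two angles opposite it (one in each of its two adjacent triangles) appear together multiplying the single factor $u_j-u_i$, reassembling precisely the cotangent-Laplacian weight. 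The outcome is the identity $\sum_j q_{ij} = \tfrac{-i}{2}\sum_j(\cot\beta_{ij}^k+\cot\beta_{ji}^l)(u_j-u_i)$, whose right-hand side vanishes at every interior vertex exactly when $u$ is harmonic in the sense of \eqref{eq:cotangent}. Together with the automatic first condition, this gives the equivalence.

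The main obstacle is the bookkeeping in this reorganization. One must pin down, consistently with the left/right-face convention of Figure~\ref{fig:orientation}, which of the third vertices $k,l$ belongs to which adjacent triangle as $j$ runs cyclically around the vertex star, so that the angle-at-$i$ cancellations and the angle-at-$j$ recombinations are matched correctly and the opposite angles are assigned to the right edges. Once the cyclic indexing of the star is fixed, both the cancellation and the reassembly into \eqref{eq:cotangent} are forced, but keeping the signs straight is where the care is needed.
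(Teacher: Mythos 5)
Your proposal is correct and follows essentially the same route as the paper: both use Lemma~\ref{lem:hessian} for the purely imaginary values, both observe that $\sum_j q_{ij}/dz(e_{ij})=\sum_j du_z(e^*_{ij})=0$ holds automatically as a telescoping sum of the face function $u_z$ around the vertex, and both reduce $\sum_j q_{ij}=0$ to the cotangent-Laplacian condition \eqref{eq:cotangent}. The only difference is presentational: you spell out the per-triangle cancellation of the angle-at-$i$ terms and the regrouping into cotan weights, which the paper leaves implicit when it cites Lemma~\ref{lem:hessian}.
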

	\begin{proof}
		Note $q$ is well defined since
		\[
		q_{ij} = du_z(e^*_{ij}) dz(e_{ij}) = du_z(e^*_{ji}) dz(e_{ji}) =q_{ji}.
		\]
		It holds for general functions $u:V \to \mathbb{R}$ that
		\begin{gather*}
			\Re (q) \equiv 0 \\
			\sum_j q_{ij}/dz(e_{ij})= \sum_j du_z(e^*_{ij}) =0 \quad \forall i \in V_{int}.
		\end{gather*}
		We know from Lemma \ref{lem:hessian} that for every interior vertex $i \in V_{int}$
		\[
		\sum_j q_{ij} = \sum_j du_z(e^*_{ij}) dz(e_{ij}) = \frac{i}{2} \sum_j (\cot \beta_{ij}^k + \cot \beta_{ji}^l) (u_{j} -u_i).
		\]
		Hence, $u$ is harmonic if and only if $q$ is a holomorphic quadratic differential.   
	\end{proof}
	
	\begin{lemma} \label{lem:integrate}
		Let $z:V \to \mathbb{C}$ be a realization of a simply connected triangular mesh. Given a function $q:E_{int} \to i\mathbb{R}$ such that for every interior vertex $i \in V_{int}$ 
		\[
		\sum_j q_{ij} /dz(e_{ij}) =0,
		\]
		there exists a function $u:V \to \mathbb{R}$ such that for every interior edge $\{ij\}$
		\[
		q_{ij} = du_z(e^*_{ij}) dz(e_{ij}).
		\]
	\end{lemma}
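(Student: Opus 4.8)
The plan is to integrate $q$ twice: first over the dual graph to recover the face function $u_z$, and then once more over the primal graph to recover $u$ itself, invoking simple connectivity at each stage exactly as in the construction of the conjugate harmonic function earlier in the paper. The purely imaginary nature of $q$ will enter in a single decisive place.

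First I would read the hypothesis $\sum_j q_{ij}/dz(e_{ij})=0$ as the statement that the dual $1$-form defined on interior dual edges by $\tau(e^*_{ij}):=q_{ij}/dz(e_{ij})$ is closed. Around an interior vertex $i$ every incident edge is interior, so the boundary of the dual cell of $i$ is exactly the collection of dual edges $e^*_{ij}$, and $\sum_j\tau(e^*_{ij})=0$ is precisely the closedness condition on that dual cycle. Since $M$ is simply connected, these dual cells span the cycle space of the face-dual graph, so closedness forces exactness and there exists $\psi:F\to\mathbb{C}$, unique up to an additive constant, with
\[
\psi_{ijk}-\psi_{jil}=\tau(e^*_{ij})=q_{ij}/dz(e_{ij})
\]
for every interior edge $\{ij\}$ with left face $\{ijk\}$ and right face $\{jil\}$. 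This $\psi$ is my candidate for $u_z$.

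Next I would produce a real vertex function $u$ with $u_z=\psi$. Setting $g:=2\overline{\psi}:F\to\mathbb{C}$ as the candidate for $\grad_z u$, I define a primal $1$-form by $\mu(e_{ij}):=\langle g_{ijk},dz(e_{ij})\rangle$ using a face $\{ijk\}$ containing the edge. The one genuinely substantial point is that $\mu$ is well defined on interior edges: the discrepancy between the two incident faces is
\[
\langle g_{ijk}-g_{jil},dz(e_{ij})\rangle=\Re\big(2(\psi_{ijk}-\psi_{jil})\,dz(e_{ij})\big)=\Re(2q_{ij}),
\]
which vanishes precisely because $q_{ij}$ is purely imaginary. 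Thus the assumption $q:E_{int}\to i\mathbb{R}$ is exactly what guarantees that $g$ is an honest, edge-consistent gradient field. Moreover $\mu$ is closed on each face, since $\langle g_{ijk},dz(e_{ij})+dz(e_{jk})+dz(e_{ki})\rangle=0$, so by simple connectivity $\mu$ is exact and there is $u:V\to\mathbb{R}$ with $u_j-u_i=\mu(e_{ij})$.

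Finally I would check that this $u$ works. On each face $\grad_z u_{ijk}$ is the unique vector satisfying $\langle\grad_z u_{ijk},dz(e_{ij})\rangle=u_j-u_i$ for the edges of the face, and by construction $g_{ijk}=2\overline{\psi}_{ijk}$ satisfies the same two independent conditions; hence $\grad_z u=g$ and $u_z=\tfrac{1}{2}\overline{\grad_z u}=\psi$. Reinserting this into the defining relation for $\psi$ yields $du_z(e^*_{ij})=\psi_{ijk}-\psi_{jil}=q_{ij}/dz(e_{ij})$, that is $q_{ij}=du_z(e^*_{ij})\,dz(e_{ij})$, as required. I expect the main obstacle to be the first integration step: closedness of $\tau$ around the interior vertices is the only hypothesis available, and one must verify that this alone forces global exactness, which is where the disk topology is used through the fact that the dual cells of interior vertices generate the cycle space of the face-dual graph.
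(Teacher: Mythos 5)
Your proposal is correct and follows essentially the same route as the paper's proof: integrate the closed dual $1$-form $\tau(e^*_{ij})=q_{ij}/dz(e_{ij})$ to a face function (the paper's $h$, your $\psi$), use $\Re(q)\equiv 0$ to see that the induced primal $1$-form $\langle 2\bar{\psi},dz\rangle$ is well defined across edges, integrate it to get $u$, and identify $\psi$ with $u_z$. Your only addition is to spell out the final identification $\grad_z u = 2\overline{\psi}$ via uniqueness of the face gradient, which the paper leaves as ``it can be verified.''
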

	\begin{proof}
		We consider a dual 1-form $\tau$ on $M$ defined by
		\[
		\tau(e^*_{ij})=q_{ij} /dz(e_{ij}).
		\]
		Since $M$ is simply connected and 
		\[
		\sum_j \tau(e^*_{ij}) = \sum_j q_{ij} /dz(e_{ij}) =0,
		\]
		there exists a function $h:F \to \mathbb{C}$ such that
		\[
		dh(e^*_{ij}) := h_{ijk} - h_{jil} =\tau(e^*_{ij}).
		\]
		It implies we have $\Re(dh(e^*)dz(e)) = \Re(q) \equiv 0$ and
		\[
		\omega(e_{ij}) := \langle 2\bar{h}_{ijk} , dz(e_{ij}) \rangle = \langle 2\bar{h}_{jil} , dz(e_{ij}) \rangle .
		\]
		is a well-defined $\mathbb{R}$-valued 1-form. Since the triangular mesh is simply connected and for every face $\{ijk\}$
		\[
		\omega(e_{ij}) + \omega(e_{jk}) + \omega(e_{ki}) =0,
		\] there exists a function $u:V \to \mathbb{R}$ such that for every oriented edge $e_{ij}$
		\[
		du(e_{ij}) = u_{j} - u_i =\omega(e_{ij}).
		\]
		It can be verified that
		\[
		h = \frac{1}{2}\overline{\grad}_z u = u_z.
		\]		 
		Hence we obtain
		\[
		q_{ij} = \tau(e^*_{ij}) dz(e_{ij})= dh(e^*_{ij}) dz(e_{ij}) =du_z(e^*_{ij}) dz(e_{ij})
		\]
		for every interior edge $\{ij\}$.  
	\end{proof}
	\begin{theorem} \label{thm:holo1form}
		Suppose $z:V \to \mathbb{C}$ is a realization of a simply connected triangular mesh. Then any holomorphic quadratic differential $q: E_{int} \to i\mathbb{R}$ is of the form
		\[
		q_{ij} = du_z(e^*_{ij}) dz(e_{ij})  \quad  \forall e_{ij} \in \vv{E}_{int}
		\]
		for some harmonic function $u:V \to \mathbb{R}$.
		
		Furthermore, the space of holomorphic quadratic differentials is a vector space isomorphic to the space of discrete harmonic functions module linear functions.
	\end{theorem}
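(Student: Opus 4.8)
The plan is to obtain both statements by stringing together the three preceding lemmas and then computing a single kernel. For the representation statement, I would start from an arbitrary holomorphic quadratic differential $q:E_{int}\to i\mathbb{R}$. By definition it satisfies $\sum_j q_{ij}/dz(e_{ij})=0$ at every interior vertex, which is exactly the hypothesis of Lemma \ref{lem:integrate}. Applying that lemma produces a function $u:V\to\mathbb{R}$ with $q_{ij}=du_z(e^*_{ij})\,dz(e_{ij})$ on every interior edge. Since the $q$ built from this $u$ in the sense of Lemma \ref{lem:harholo} is literally our original $q$, which is holomorphic by assumption, the ``if and only if'' in Lemma \ref{lem:harholo} forces $u$ to be harmonic. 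This establishes the first claim.

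For the isomorphism I would introduce the map $\mathcal{L}$ that sends a harmonic function $u$ to the assignment $q_{ij}:=du_z(e^*_{ij})\,dz(e_{ij})$. It is $\mathbb{R}$-linear in $u$, because $\grad_z u$, hence $u_z=\tfrac12\,\overline{\grad_z u}$, and hence the dual difference $du_z$ all depend linearly on the vertex values of $u$. By Lemma \ref{lem:harholo} its image lies in the space of holomorphic quadratic differentials, and by the representation statement just proved it is surjective onto that space. The only remaining task is to identify $\ker\mathcal{L}$ with the space of linear (affine) functions.

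To compute the kernel I would argue as follows. If $\mathcal{L}u=0$ then $du_z(e^*_{ij})\,dz(e_{ij})=0$ on every interior edge; since $z$ is a realization we have $dz(e_{ij})=z_j-z_i\neq0$, so $du_z(e^*_{ij})=0$, meaning $u_z$ takes the same value on the two faces sharing each interior edge. Connectivity of the mesh then makes $u_z$, and therefore $\grad_z u=2\,\overline{u_z}$, globally constant, say equal to some $a\in\mathbb{C}$. Using $\langle \grad_z u_{ijk}, dz(e_{ij})\rangle = u_j-u_i$ this gives $u_j-u_i=\langle a, z_j-z_i\rangle$ along every edge, so $u_i-\langle a, z_i\rangle$ is constant across the connected mesh and $u$ is the restriction of an affine function. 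Conversely any such linear $u$ has constant gradient, hence constant $u_z$, hence $du_z\equiv0$ and $\mathcal{L}u=0$; moreover such $u$ is harmonic since the resulting $q\equiv0$ is trivially a holomorphic quadratic differential, so the linear functions form a genuine subspace of the harmonic functions. Thus $\ker\mathcal{L}$ is exactly the space of linear functions, and the first isomorphism theorem for vector spaces yields the asserted isomorphism between holomorphic quadratic differentials and harmonic functions modulo linear functions.

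I expect the main obstacle to be the reverse direction of the kernel computation, namely deducing that a globally constant $u_z$ forces $u$ to be affine rather than merely locally affine. This is where connectivity and the nonvanishing of $dz(e_{ij})$ are essential; once one records the identity $\langle \grad_z u, dz\rangle = du$ from the construction of the gradient, the argument is short, and everything else is bookkeeping with Lemmas \ref{lem:harholo} and \ref{lem:integrate}.
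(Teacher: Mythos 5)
Your proof is correct and takes essentially the same route as the paper: the representation statement is obtained by chaining Lemma \ref{lem:integrate} with the equivalence in Lemma \ref{lem:harholo}, and the isomorphism by showing the kernel of the linear map $u \mapsto du_z\,dz$ consists exactly of the linear functions $u = \langle a,z\rangle + b$. The paper records this kernel computation only as the terse chain $du_z \equiv 0 \iff \grad u \equiv a \iff du = \langle a, dz\rangle \iff u = \langle a,z\rangle + b$; your version merely fills in the connectivity and nonvanishing-of-$dz$ details that the paper leaves implicit.
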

	\begin{proof}
		The first part of the statement follows from Lemma \ref{lem:harholo} and Lemma \ref{lem:integrate}. In order to show the second part, it suffices to observe that 
		\begin{align*}
			du_z \equiv 0  \iff \grad u \equiv a \iff du = \langle a, dz \rangle \iff u = \langle a,z \rangle +b 
		\end{align*}
		for some $a,b \in \mathbb{C}$.  
	\end{proof}
	
	In previous sections, we showed that every harmonic function corresponds to an infinitesimal conformal deformation. The following shows that discrete holomorphic quadratic differentials are the change in the intersection angles of circumscribed circles.
	\begin{theorem}\label{thm:hesscr}
		Let $z:V \to \mathbb{C}$ be a realization of a simply connected triangular mesh. Suppose $u:V \to \mathbb{R}$ is a discrete harmonic function and $\dot{z}$ is an infinitesimal conformal deformation with $u$ as scale factors. Then we have
		\[
		du_z dz = -\frac{1}{2} \frac{\dot{\cratio}_{z}}{\cratio_z} = -\frac{i}{2} \dot{\phi}
		\]
		where $\dot{\phi}:E_{int} \to \mathbb{R}$ denotes the change in the intersection angles of neighboring circumscribed circles.  
	\end{theorem}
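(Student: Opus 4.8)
The plan is to compute the logarithmic derivative $\dot{\cratio}_{z,ij}/\cratio_{z,ij}$ directly and then recognize it, term by term, as the cotangent expression produced by Lemma~\ref{lem:hessian}. Writing $\cratio_{z,ij}=\frac{(z_j-z_k)(z_i-z_l)}{(z_k-z_i)(z_l-z_j)}$ and differentiating its logarithm along the flow of $\dot{z}$ gives
\[
\frac{\dot{\cratio}_{z,ij}}{\cratio_{z,ij}} = \frac{\dot z_j-\dot z_k}{z_j-z_k} + \frac{\dot z_i-\dot z_l}{z_i-z_l} - \frac{\dot z_k-\dot z_i}{z_k-z_i} - \frac{\dot z_l-\dot z_j}{z_l-z_j},
\]
a signed sum of the four edge quotients around the two triangles $\{ijk\}$ and $\{jil\}$ that share the edge $\{ij\}$ (Figure~\ref{fig:orientation}).

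First I would insert the defining property of a conformal deformation. By Definition~\ref{def:infcon} each quotient $\frac{\dot z_b-\dot z_a}{z_b-z_a}$ has real part $\frac{u_a+u_b}{2}$, so it equals $\frac{u_a+u_b}{2}+i\omega_{ab}$ with the edge rotation $\omega_{ab}$ from \eqref{eq:zdot}. Summing the four real parts telescopes to zero, which shows that $\dot{\cratio}_z/\cratio_z$ is purely imaginary and, in passing, that an infinitesimal conformal deformation preserves $|\cratio_z|$ to first order. This already yields the second equality: since by \eqref{eq:intersectionangles} the phase of $\cratio_{z,ij}$ is $e^{i\phi_{ij}}$, one has $\dot{\cratio}_z/\cratio_z=\frac{d}{dt}\log|\cratio_z|+i\dot{\phi}$, and the vanishing real part leaves exactly the $i\dot{\phi}$ term.

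The heart of the argument is the imaginary part $\omega_{jk}+\omega_{il}-\omega_{ki}-\omega_{lj}$. Here I would invoke Lemma~\ref{lem:infindeform}(c) on each of the two triangles to trade the four edge rotations for the two triangle-average rotations together with cotangent-weighted differences of the scale factors; for instance $\omega_{jk}=\omega_{ijk}-\cot\beta^i_{jk}(\sigma_{ki}-\sigma_{ij})$ and its three analogues, where $\omega_{ijk}$ denotes the average rotation of the triangle $\{ijk\}$. The crucial structural point is that $\omega_{ijk}$ enters $\omega_{jk}$ and $\omega_{ki}$ with opposite signs, and $\omega_{jil}$ enters $\omega_{il}$ and $\omega_{lj}$ with opposite signs, so both triangle averages cancel. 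Using $\sigma_{ab}=\frac{u_a+u_b}{2}$, every difference $\sigma_{ca}-\sigma_{ab}$ collapses to a difference of two vertex values, and what survives is precisely the cotangent combination
\[
\cot\beta^i_{jk}(u_k-u_j)+\cot\beta^j_{ki}(u_k-u_i)+\cot\beta^j_{il}(u_l-u_i)+\cot\beta^i_{lj}(u_l-u_j)
\]
appearing in Lemma~\ref{lem:hessian}. Matching this against the formula for $du_z(e^*_{ij})\,dz(e_{ij})$ from that lemma then gives the first equality.

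I expect the one genuinely delicate step to be the bookkeeping of signs and numerical factors. One must keep the dual-edge orientation (left face minus right face) consistent with the cross-ratio labelling of Figure~\ref{fig:orientation}, use the signed-angle convention of Lemma~\ref{lem:infindeform} so that the two triangles contribute with the correct orientation, and carefully track the factors of $\tfrac12$ entering through $\sigma_{ab}=\frac{u_a+u_b}{2}$ and through $u_z=\frac12\overline{\grad_z u}$. Pinning down the exact overall constant and sign in $du_z\,dz=-\tfrac12\,\dot{\cratio}_z/\cratio_z$ is exactly where this care is needed; the remaining content is the telescoping of the real part and the two average-rotation cancellations described above.
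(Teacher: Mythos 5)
Your plan is the paper's own proof, step for step: logarithmically differentiate the cross ratio, observe that the real (scaling) parts telescope, convert the surviving edge rotations into cotangent-weighted differences of $u$ via Lemma~\ref{lem:infindeform}(c), and compare with Lemma~\ref{lem:hessian}. The parts you actually carry out are correct; in particular $\dot{\cratio}_z/\cratio_z = i(\omega_{jk}-\omega_{ki}+\omega_{il}-\omega_{lj})$, and the identification of this purely imaginary quantity with $i\dot{\phi}$ via \eqref{eq:intersectionangles}, which is all that the second claimed equality needs.

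However, the step you defer as ``delicate bookkeeping'' is a genuine gap, and closing it honestly does not produce the constant in the statement. With $\sigma_{ab}=\tfrac{u_a+u_b}{2}$ (forced by Definition~\ref{def:infcon}), every difference entering Lemma~\ref{lem:infindeform}(c) carries a factor $\tfrac12$, e.g.\ $\sigma_{ki}-\sigma_{ij}=\tfrac12(u_k-u_j)$. Hence, after the two triangle averages cancel,
\begin{align*}
\omega_{jk}-\omega_{ki}+\omega_{il}-\omega_{lj}
&=-\tfrac{1}{2}\big(\cot\beta^{i}_{jk}(u_k-u_j)+\cot\beta^{j}_{ki}(u_k-u_i)\\
&\qquad\quad+\cot\beta^{j}_{il}(u_l-u_i)+\cot\beta^{i}_{lj}(u_l-u_j)\big),
\end{align*}
which is $-\tfrac12$ times the cotangent combination, not ``precisely'' that combination as you assert. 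Feeding this into Lemma~\ref{lem:hessian} gives
\[
\frac{\dot{\cratio}_z}{\cratio_z}=-\frac{i}{2}\,\big(\text{cotangent combination}\big)=du_z(e^*_{ij})\,dz(e_{ij}),
\]
i.e.\ $du_z\,dz=\dot{\cratio}_z/\cratio_z=i\dot{\phi}$, which differs from the asserted identity by a factor of $-2$. You can confirm this on two triangles ($z_i=0$, $z_j=i$, $z_k=-1$, $z_l=1$, $u_l=1$, all other $u$'s zero, so harmonicity is vacuous): the conformal deformation fixing $z_i,z_j,z_k$ with scale factors $u$ has $\dot{z}_l=\tfrac{1-i}{2}$, and one computes directly $\dot{\cratio}_z/\cratio_z=-\tfrac{i}{2}$, while $(u_z)_{ijk}=0$, $(u_z)_{jil}=\tfrac12$, $dz(e_{ij})=i$ give $du_z(e^*_{ij})\,dz(e_{ij})=-\tfrac{i}{2}$ as well, whereas $-\tfrac12\,\dot{\cratio}_z/\cratio_z=\tfrac{i}{4}$. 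Be aware that the paper's own proof commits exactly the same slip: the second line of its displayed computation asserts that the $\omega$-sum equals the full cotangent combination. So your proposal reproduces, rather than repairs, that error; no amount of care in the deferred step will yield the factor $-\tfrac12$ of the statement, because the route you (and the paper) take actually proves $du_z\,dz=\dot{\cratio}_z/\cratio_z$.
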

	\begin{proof}
		We write $(\dot{z}_{j} -\dot{z}_i) = (\frac{h_i + h_{j}}{2} + i\omega_{ij}) (z_{j} -z_i)$. Applying Lemma \ref{lem:hessian} we have
		\begin{align*}
			\dot{\cratio}_{z,ij}/\cratio_{z,ij} =& i\omega_{jk} - i\omega_{ki} + i\omega_{il} - i\omega_{lj} \\
			=& 	i\big(\cot \beta^{i}_{jk} (u_k -u_{j})+\cot \beta^{j}_{ki} (u_k -u_i)+\cot \beta^{j}_{il} (u_l -u_i)+\cot \beta^{i}_{lj} (u_l -u_{j})\big) \\
			=& 	-2 du_z(e^*_{ij}) dz(e_{ij}).
		\end{align*}
		The equality
		\[
		\frac{\dot{\cratio}_{z}}{\cratio_z} = i\dot{\phi}
		\]
		follows from Equation \eqref{eq:intersectionangles}.  
	\end{proof}
	
	\section{Conformal deformations in terms of End($\mathbb{C}^2$)}
	In this section we show how an infinitesimal conformal deformation gives rise to a discrete analogue of a holomorphic null curve in $\mathbb{C}^3$. Later we will see that the real parts of such a "holomorphic null curve" can be regarded as the Weierstrass representation of a discrete minimal surface.
	
	Up to now we have mostly treated the Riemann sphere $\mathbb{C}\textrm{P}^1$ as the extended complex plane $\overline{\mathbb{C}}=\mathbb{C}\cup \{ \infty \}$. In this section we will take a more explicitly M\"{o}bius geometric approach: We will represent fractional linear transformations of $\overline{\mathbb{C}}$ by linear transformations of $\mathbb{C}^2$ with determinant one. Actually, the group of M\"{o}bius transformations is
	\[
	\text{M\"{o}b}(\overline{\mathbb{C}}) \cong \mbox{PSL}(2,\mathbb{C}) \cong \mbox{SL}(2,\mathbb{C})/(\pm I).
	\]
	However, since we are mainly interested in infinitesimal deformations and any map into $\mbox{PSL}(2,\mathbb{C})$ whose values stays close to the identity admits a canonical lift to $\mbox{SL}(2,\mathbb{C})$, we can safely ignore the difference between $\mbox{PSL}(2,\mathbb{C})$ and $\mbox{SL}(2,\mathbb{C})$.
	
	Given a realization $z:V \to \mathbb{C}$ of a triangular mesh we consider its lift $\psi:V \to \mathbb{C}^2$
	\[
	\psi := \left(\begin{array}{c} z \\ 1
	\end{array} \right)
	\]
	and regard the realization as a map $\Psi:V\to\mathbb{C}\textrm{P}^1$ where
	\begin{equation*}
		\Psi :=\mathbb{C}\left(\begin{array}{c} z \\ 1
		\end{array} \right)=[\psi].
	\end{equation*}
	
	The action of a M\"{o}bius transformation on the Riemann sphere is given by a matrix $A \in \mbox{SL}(2,\mathbb{C})$, which is unique up to sign:
	\[
	[\varphi] \mapsto [A\varphi].
	\]
	
	Before we investigate infinitesimal deformations we first consider finite deformations of a triangular mesh $\Psi:V \to \mathbb{C}\textnormal{P}^1$. Given such a finite deformation, the change in the positions of the three vertices of a triangle $\{ijk\}$ can be described by a M\"{o}bius transformation, which is represented by $G_{ijk} \in \mbox{SL}(2,\mathbb{C})$. They satisfy a compatibility condition on each interior edge $\{ij\}$ (see Figure \ref{fig:orientation}):
	\begin{gather*}
		[G_{ijk}\psi_i] = [G_{jil}\psi_i], \\ [G_{ijk}\psi_{j}] = [G_{jil}\psi_{j}].
	\end{gather*}
	Suppose now that the mesh is simply connected. Then up to a global M\"{o}bius transformation the map $G:F \to \mbox{SL}(2,\mathbb{C})$ can be uniquely reconstructed from the \emph{multiplicative dual 1-form} defined as
	\begin{equation*}
		G(e^*_{ij}):=G_{jil}^{-1}G_{ijk}.
	\end{equation*}	
	$G(e^*_{ij})$ is defined whenever $\{ij\}$ is an interior edge and we have
	\begin{equation*}
		G(e^*_{ij}) = G(e^*_{ji})^{-1}.
	\end{equation*}
	Moreover, for every interior vertex $i$ we have
	\begin{equation*}
		\prod_j G(e^*_{ij}) = I.
	\end{equation*}
	The compatibility conditions imply that for interior each edge $\{ij\}$ there exist $\lambda_{ij,i},\lambda_{ij,j} \in \mathbb{C} \backslash \{0\}$ such that
	\begin{align*}
		G(e^*_{ij}) \psi_i &= \lambda_{ij,i} \psi_i \\ G(e^*_{ij}) \psi_{j} &= \lambda_{ij,j} \psi_{j}.
	\end{align*}
	Since $\lambda_{ij,i}\,\lambda_{ij,j} = \det(G(e^*_{ij}))=1$, we have 
	\[
	\lambda_{ij} := \lambda_{ij,i} = 1/ \lambda_{ij,j}.
	\]
	Because of $G(e^*_{ij}) = G(e^*_{ij})^{-1}$ we know
	\[
	\lambda_{ij} = \lambda_{ij,i} = 1/ \lambda_{ji,i} = \lambda_{ji}.
	\]
	Hence $\lambda$ defines a complex-valued function on the set $E_{int}$ of interior edges.
	
	We now show that for each interior edge $\lambda_{ij}$ determines the change in the cross ratio of the four points of the two adjacent triangles. Note that the cross ratio of four points in $\mathbb{C}\textrm{P}^1$ can expressed as
	\[
	\cratio([\psi_{j}],[\psi_k],[\psi_i],[\psi_l]) = \frac{\det(\psi_k,\psi_{j})\det(\psi_l,\psi_i)}{\det(\psi_i,\psi_k)\det(\psi_{j},\psi_l)}.
	\]
	\begin{lemma}
		Suppose we are given four points $[\psi_i],[\psi_{j}],[\psi_k],[\psi_l] \in \mathbb{C}\textnormal{P}^1$ and $G \in \textnormal{SL}(2,\mathbb{C})$ with
		\begin{align*}
			G \psi_i &= \lambda^{-1}\psi_i \\
			G \psi_{j} &= \lambda \psi_{j}
		\end{align*}
		for some $\lambda \in \mathbb{C} \backslash \{0\}$. Then the cross ratio of the four transformed points
		\begin{equation*}
			[\tilde{\psi}_i]=[G \psi_i] \quad,\quad
			[\tilde{\psi}_{j}]=[G \psi_{j}] \quad,\quad
			[\tilde{\psi}_k]=[G \psi_k] \quad,\quad
			[\tilde{\psi}_l]=[\psi_l]
		\end{equation*}
		is given by
	\end{lemma}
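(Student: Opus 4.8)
The plan is to compute the cross ratio of the four transformed points directly from the determinant formula
\[
\cratio([\tilde{\psi}_{j}],[\tilde{\psi}_k],[\tilde{\psi}_i],[\tilde{\psi}_l]) = \frac{\det(\tilde{\psi}_k,\tilde{\psi}_{j})\det(\tilde{\psi}_l,\tilde{\psi}_i)}{\det(\tilde{\psi}_i,\tilde{\psi}_k)\det(\tilde{\psi}_{j},\tilde{\psi}_l)},
\]
exploiting that three of the four representatives ($i,j,k$) are acted on by $G$ while $\tilde{\psi}_l=\psi_l$ is left untouched. The two facts I would lean on are the unimodularity of $G$ and the eigenvector relations $G\psi_i=\lambda^{-1}\psi_i$, $G\psi_{j}=\lambda\psi_{j}$.

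First I would record the $\mathrm{SL}(2,\mathbb{C})$-invariance of the determinant: since $\det G = 1$, for any $u,v\in\mathbb{C}^2$ we have $\det(Gu,Gv)=\det(u,v)$. This immediately handles the two determinants whose arguments are \emph{both} transformed, giving $\det(\tilde{\psi}_k,\tilde{\psi}_{j})=\det(\psi_k,\psi_{j})$ and $\det(\tilde{\psi}_i,\tilde{\psi}_k)=\det(\psi_i,\psi_k)$; these contribute no factor of $\lambda$.

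Next I would treat the two determinants that pair the untransformed point $\psi_l$ with one of the eigenvectors. Pulling the eigenvalues out of the determinant gives $\det(\tilde{\psi}_l,\tilde{\psi}_i)=\lambda^{-1}\det(\psi_l,\psi_i)$ and $\det(\tilde{\psi}_{j},\tilde{\psi}_l)=\lambda\det(\psi_{j},\psi_l)$. Substituting all four into the formula, the factor $\lambda^{-1}$ in the numerator and the factor $\lambda$ in the denominator combine into an overall factor $\lambda^{-2}$, so that
\[
\cratio([\tilde{\psi}_{j}],[\tilde{\psi}_k],[\tilde{\psi}_i],[\tilde{\psi}_l]) = \lambda^{-2}\,\cratio([\psi_{j}],[\psi_k],[\psi_i],[\psi_l]).
\]

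There is no genuine obstacle here beyond bookkeeping, and I do not expect any hard step. The only point demanding care is that $\psi_l$ enters exactly two of the four determinants, each time paired with a \emph{different} eigenvector, so the eigenvalue factors do not cancel but instead reinforce; one must keep track of which factor lands in the numerator versus the denominator to pin down the sign of the exponent in $\lambda^{-2}$. (This is precisely the statement that, in the infinitesimal setting of Theorem \ref{thm:hesscr}, the logarithmic change of the cross ratio across the edge $\{ij\}$ is governed by the eigenvalue $\lambda$ of the transition $G(e^*_{ij})$.)
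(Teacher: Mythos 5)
Your proposal is correct and follows exactly the paper's own argument: expand the cross ratio of the transformed points via the determinant formula, use $\det G = 1$ to cancel $G$ in the two determinants where both entries are transformed, and pull the eigenvalue factors $\lambda^{-1}$ and $\lambda$ out of the two determinants involving the untouched $\psi_l$, yielding the factor $\lambda^{-2}$. The paper's proof is just a more compressed version of this same computation.
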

	\begin{equation*}
		\cratio([\tilde{\psi}_{j}],[\tilde{\psi}_k],[\tilde{\psi}_i],[\tilde{\psi}_l]) =  \cratio([\psi_{j}],[\psi_k],[\psi_i],[\psi_l])/\lambda^2.
	\end{equation*}
	
	\begin{proof}
		\begin{align*}
			\cratio([\tilde{\psi}_{j}],[\tilde{\psi}_k],[\tilde{\psi}_i],[\tilde{\psi}_l])&=\frac{\det(G\psi_k,G\psi_{j})\det(\psi_l,G \psi_i)}{\det(G\psi_i,G\psi_k)\det(G\psi_{j},\psi_l)} \\
			&=	\cratio([\psi_{j}],[\psi_k],[\psi_i],[\psi_l])/\lambda^2.
		\end{align*}
		 
	\end{proof}
	
	We now can summarize the information about finite deformations of a realization as follows:
	
	\begin{theorem}
		Let $\Psi:V \to \mathbb{C}\textnormal{P}^1$ be a realization of a simply connected triangular mesh. Then there is a bijection between finite deformations of $\Psi$ in $\mathbb{C}\textnormal{P}^1$ modulo global M\"{o}bius transformations and multiplicative dual 1 forms $G: \vv{E}^*_{int} \to \textnormal{SL}(2,\mathbb{C})$ satisfying for every interior vertex $i$
		\begin{equation*}
			\prod_j G(e^*_{ij}) = I
		\end{equation*}
		and for every interior edge
		\begin{align*}
			G(e^*_{ij}) &= G(e^*_{ji})^{-1} \\
			G(e^*_{ij}) \psi_i &= \lambda_{ij}^{-1}\psi_i \\
			G(e^*_{ij}) \psi_{j} &= \lambda_{ij} \psi_{j} .
		\end{align*}
		Here $\lambda:E_{int} \to \mathbb{C} \backslash \{0\}$. We denote by $\cratio:E_{int} \to \mathbb{C}$ the cross ratios of $\Psi$ and $\widetilde{\cratio}:E_{int} \to \mathbb{C}$ the cross ratios of a new realization described by $G$. Then
		\[
		\widetilde{\cratio}= \cratio/\lambda^2.
		\]
		In particular, 
		\begin{align*}
			|\lambda| \equiv 1 & \implies \text{ the deformation is conformal}. \\
			\Arg(\lambda) \equiv 0 & \implies \text{  the deformation is a pattern deformation.} 
		\end{align*}
	\end{theorem}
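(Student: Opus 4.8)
The plan is to verify the stated correspondence in both directions and then read off the cross-ratio law from the preceding lemma. For the forward direction I start from a finite deformation $G:F\to\mathrm{SL}(2,\mathbb{C})$ and set $G(e^*_{ij}):=G_{jil}^{-1}G_{ijk}$ as in the text. The antisymmetry $G(e^*_{ij})=G(e^*_{ji})^{-1}$ is immediate from the definition. The compatibility conditions say exactly that $G(e^*_{ij})$ fixes $[\psi_i]$ and $[\psi_j]$, so $\psi_i,\psi_j$ are eigenvectors; since $\det G(e^*_{ij})=1$ the two eigenvalues are reciprocal, which defines $\lambda_{ij}$ and yields the stated eigenvector relations. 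Finally, ordering the faces $f_1,\dots,f_n$ cyclically around an interior vertex $i$, the product $\prod_j G(e^*_{ij})$ telescopes as $f_1^{-1}f_2\,f_2^{-1}f_3\cdots f_n^{-1}f_1=I$.

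The backward direction is where the real work lies, and I expect it to be the main obstacle. Given a dual 1-form $G$ with the three properties, I must reconstruct a map $G:F\to\mathrm{SL}(2,\mathbb{C})$, unique up to a global left factor, by multiplicative integration: fix a base face with value $I$ and define $G_f$ as the ordered product of the values $G(e^*)$ along a dual path from the base face to $f$. Path-independence is the crux, and because the target group is non-abelian it does not follow from a naive ``closed implies exact''. Instead I argue topologically: since $M$ is simply connected, the loops encircling interior vertices normally generate the fundamental group of the dual graph; the multiplicative holonomy is a homomorphism to $\mathrm{SL}(2,\mathbb{C})$ that by the hypothesis $\prod_j G(e^*_{ij})=I$ sends each such loop to the identity, and since its kernel is a normal subgroup containing all these generators it must be trivial on every closed dual walk. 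Hence $G_f$ is well defined. The eigenvector relations then guarantee that adjacent faces agree on shared vertices in $\mathbb{C}\mathrm{P}^1$, so $G$ is a genuine deformation; the one remaining freedom, the value at the base face, is a global left multiplication $G_f\mapsto AG_f$ that fixes every $G_{jil}^{-1}G_{ijk}$ and amounts to post-composing the deformed mesh with the M\"obius transformation $A$. The forward and backward constructions are therefore mutually inverse modulo global M\"obius transformations.

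To obtain the cross-ratio law I fix an interior edge $\{ij\}$ with adjacent faces $\{ijk\}$ and $\{jil\}$ and consider the deformed vertices $\tilde\psi_i=G_{ijk}\psi_i$, $\tilde\psi_j=G_{ijk}\psi_j$, $\tilde\psi_k=G_{ijk}\psi_k$, $\tilde\psi_l=G_{jil}\psi_l$. Since cross ratios are invariant under the global transformation $G_{jil}^{-1}$, I may replace these four points by their images under $G_{jil}^{-1}$; then $\tilde\psi_l$ maps to $\psi_l$ while $\tilde\psi_i,\tilde\psi_j,\tilde\psi_k$ map to $G(e^*_{ij})\psi_i=\lambda_{ij}^{-1}\psi_i$, $G(e^*_{ij})\psi_j=\lambda_{ij}\psi_j$ and $G(e^*_{ij})\psi_k$. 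This is precisely the configuration of the preceding lemma with $\lambda=\lambda_{ij}$, which gives $\widetilde{\cratio}_{ij}=\cratio_{ij}/\lambda_{ij}^2$. For the final two implications I invoke the definitions of discrete conformality: if $|\lambda|\equiv 1$ then $|\widetilde{\cratio}|=|\cratio|/|\lambda|^2=|\cratio|$, so the norms of the cross ratios are preserved and the deformation is conformal; if $\lambda$ is real and positive then $\lambda^2$ is too, so $\Arg(\widetilde{\cratio})=\Arg(\cratio)$ and the deformation preserves the pattern structure.
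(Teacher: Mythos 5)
Your proposal is correct and follows essentially the same route as the paper: the theorem there is stated as a summary of the immediately preceding construction (the multiplicative dual 1-form $G(e^*_{ij})=G_{jil}^{-1}G_{ijk}$, its eigenvalue function $\lambda$ forced by $\det = 1$, and the cross-ratio lemma), and your forward direction, your reduction of the cross-ratio law to that lemma via invariance under the global transformation $G_{jil}^{-1}$, and the final two implications from the definitions of conformal equivalence and pattern structure all coincide with that reading. The only place you go beyond the paper is the backward direction, where the paper merely asserts that $G:F\to\mathrm{SL}(2,\mathbb{C})$ can be reconstructed uniquely up to a global M\"obius transformation, while you supply a correct justification (path-independence of the multiplicative integral because the interior-vertex loops normally generate the fundamental group of the dual graph of a simply connected mesh, and the holonomy homomorphism kills them by hypothesis).
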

	
	Suppose we have a family of deformations described by dual 1-forms $G_t: \vv{E}_{int} \to \mbox{SL}(2,\mathbb{C})$ with $G_0 \equiv I$. By considering $\eta := \frac{d}{dt}|_{t=0}\, G_t$ we obtain the following description of infinitesimal deformations:
	\begin{corollary}
		
		Let $\Psi:V \to \mathbb{C}\textnormal{P}^1$ be a realization of a simply connected triangular mesh. Then there is a bijection between infinitesimal deformations of $\Psi$ in $\mathbb{C}\textnormal{P}^1$ modulo infinitesimal M\"{o}bius transformations and dual 1 forms $\eta: \vv{E}_{int} \to \textnormal{sl}(2,\mathbb{C})$ satisfying for every interior vertex $i$
		\begin{equation}
			\sum_j \eta(e^*_{ij}) = 0 \label{eq:lieclosed}
		\end{equation}
		and for every interior edge
		\begin{align*}
			\eta(e^*_{ij}) &= -\eta(e^*_{ji}) \\
			\eta(e^*_{ij}) \psi_i &= -\mu_{ij}\,\psi_i \\
			\eta(e^*_{ij}) \psi_{j} &= \mu_{ij} \,\psi_{j}.
		\end{align*}
		Here $\mu: E_{int} \to \mathbb{C}$. We denote by $\cratio:E_{int} \to \mathbb{C}$ the cross ratios of $\Psi$ and $\dot{\cratio}:E_{int} \to \mathbb{C}$ the rate of change in cross ratios induced by the infinitesimal deformation described by $\eta$. Then
		\[
		\mu = -\frac{1}{2} \frac{\dot{\cratio}}{\cratio}.
		\]
		In particular,
		\begin{align*}
			\Re(\mu) \equiv 0 & \implies \text{ the infinitesimal deformation is conformal}, \\
			\Im(\mu) \equiv 0 & \implies \text{ the infinitesimal deformation is a pattern deformation.}
		\end{align*}
	\end{corollary}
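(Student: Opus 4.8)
The plan is to obtain the corollary as the derivative at $t=0$ of the preceding theorem on finite deformations, exploiting that $G_0\equiv I$ and therefore $\lambda_0\equiv 1$. Concretely, for a family $G_t$ with $G_0\equiv I$ I set $\eta:=\frac{d}{dt}\big|_{t=0}G_t$ and differentiate each structural equation of that theorem at $t=0$. From $\det G_t\equiv 1$ and Jacobi's formula I obtain $\tr\eta=\frac{d}{dt}\big|_{t=0}\det G_t=0$, so $\eta$ takes values in $\mathrm{sl}(2,\mathbb{C})$. Differentiating $G(e^*_{ij})=G(e^*_{ji})^{-1}$ at the identity gives the antisymmetry $\eta(e^*_{ij})=-\eta(e^*_{ji})$, and differentiating $\prod_j G(e^*_{ij})=I$ by the Leibniz rule, where every factor equals $I$ at $t=0$, collapses the product into the sum $\sum_j\eta(e^*_{ij})=0$, which is \eqref{eq:lieclosed}.

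For the eigenvalue data I differentiate $G(e^*_{ij})\psi_i=\lambda_{ij}^{-1}\psi_i$ and $G(e^*_{ij})\psi_{j}=\lambda_{ij}\psi_{j}$ at $t=0$; since $\lambda_{ij}(0)=1$, setting $\mu_{ij}:=\dot\lambda_{ij}(0)$ yields precisely $\eta(e^*_{ij})\psi_i=-\mu_{ij}\psi_i$ and $\eta(e^*_{ij})\psi_{j}=\mu_{ij}\psi_{j}$ (these also re-confirm $\eta(e^*_{ij})\in\mathrm{sl}(2,\mathbb{C})$, its eigenvalues being $\pm\mu_{ij}$). Differentiating $\widetilde{\cratio}=\cratio/\lambda^2$ gives $\dot{\cratio}=-2\mu\,\cratio$, i.e.\ $\mu=-\tfrac12\,\dot{\cratio}/\cratio$. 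The final two implications then follow exactly as in the finite case: since $\dot{\cratio}/\cratio=-2\mu$, the condition $\Re\mu\equiv 0$ says that $|\cratio|$ is stationary to first order, which is infinitesimal conformality, while $\Im\mu\equiv 0$ says that $\Arg\cratio$ is stationary, which is an infinitesimal pattern deformation (compare Theorem \ref{thm:hesscr}).

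The one step that needs genuine work beyond differentiation is the bijection, and here the main obstacle is the inverse (reconstruction) direction. Given an $\mathrm{sl}(2,\mathbb{C})$-valued dual 1-form $\eta$ that is antisymmetric and satisfies $\sum_j\eta(e^*_{ij})=0$ around every interior vertex, I want to recover an infinitesimal deformation of $\Psi$, unique modulo infinitesimal global M\"obius transformations. The closedness condition \eqref{eq:lieclosed} is exactly the discrete coclosedness needed so that, on the simply connected mesh, a discrete Poincar\'e lemma produces a face potential $\xi:F\to\mathrm{sl}(2,\mathbb{C})$ with $\eta(e^*_{ij})=\xi_{ijk}-\xi_{jil}$, unique up to an additive constant in $\mathrm{sl}(2,\mathbb{C})$; this is the additive (linearized) analogue of reconstructing $G$ from its multiplicative dual 1-form in the preceding theorem, and the free additive constant is precisely the infinitesimal global M\"obius transformation being quotiented out. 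I would then check that $\xi$ defines a genuine infinitesimal deformation $\dot\Psi$ of the vertices: the eigenvector identity $\eta(e^*_{ij})\psi_i=-\mu_{ij}\psi_i$ says that $\xi_{ijk}$ and $\xi_{jil}$ induce the same infinitesimal motion of the shared point $[\psi_i]\in\mathbb{C}\mathrm{P}^1$, which is the linearization of the compatibility condition $[G_{ijk}\psi_i]=[G_{jil}\psi_i]$ and hence guarantees that $\dot\Psi_i$ is well defined independently of the incident face. Matching this reconstruction against the forward differentiation, and confirming that the two quotients (by infinitesimal M\"obius transformations, respectively by the additive constant) correspond, yields the asserted bijection.
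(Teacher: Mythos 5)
Your proposal is correct and takes essentially the same route as the paper: the paper obtains this corollary precisely by setting $\eta:=\tfrac{d}{dt}\big|_{t=0}G_t$ for a family of finite deformations with $G_0\equiv I$ and linearizing each condition of the preceding theorem, which is exactly your differentiation argument. You in fact supply more detail than the paper does---the Leibniz-rule collapse of $\prod_j G(e^*_{ij})=I$ into $\sum_j\eta(e^*_{ij})=0$, the identification $\mu_{ij}=\dot\lambda_{ij}(0)$, and the additive Poincar\'e-lemma reconstruction with the eigenvector conditions as the linearized vertex compatibility---all of which the paper leaves implicit.
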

	
	Note that given a mesh, the 1-form $\eta$ is uniquely determined by the eigenfunction $\mu$. We now investigate the constraints on $\mu$ implied by the closedness condition \eqref{eq:lieclosed} of $\eta$. 
	
	Consider the symmetric bilinear form $(\,,): \mathbb{C}^2 \times \mathbb{C}^2 \to \mbox{sl}(2,\mathbb{C})$
	\begin{align*}
		(\phi, \varphi)v := \det(\phi,v) \varphi + \det(\varphi,v) \phi.
	\end{align*}
	For $\psi_i \neq \psi_{j} \in \mathbb{C}^2$ we define
	\[
	m_{ij}:= \frac{1}{\det(\psi_i,\psi_{j})} (\psi_{j}, \psi_i) \in \textnormal{sl}(2,\mathbb{C}).
	\]
	The matrix $m_{ij}$ is independent of the representatives of $[\psi_i],[\psi_{j}] \in \mathbb{C}\textnormal{P}^1$ and we have
	\begin{align*}
		m_{ij} &= -m_{ji} \\
		m_{ij} \psi_i &=-\psi_i \\
		m_{ij} \psi_{j} &=\psi_{j}.
	\end{align*}
	Using the representatives $\psi_i = \left( \begin{array}{c}
	z_i \\ 1
	\end{array}\right)$ we obtain
	\begin{align*}
		\eta(e^*_{ij}) =& \frac{\mu_{ij}}{\det(\psi_{j},\psi_i)} (\psi_i, \psi_{j}) \\
		=& \frac{\mu_{ij}}{z_{j} -z_i} \left( \begin{array}{cc} z_i +z_{j} & -2z_i z_{j} \\ 2 & -z_i - z_{j} \end{array} \right).
	\end{align*}
	Hence
	\begin{equation}\label{eq:null-curve}
		\sum_j \eta(e^*_{ij}) =0  \quad \iff \quad \sum_j \mu_{ij} =0 \quad \text{and} \quad \sum_j \mu_{ij}/ (z_{j}-z_i) =0. 
	\end{equation}
	We consider the Pauli matrices
	\begin{align*}
		\sigma_1 = \left( \begin{array}{cc} 0 & 1 \\ 1 & 0 \end{array} \right)\quad ,\quad \sigma_2 = \left( \begin{array}{cc} 0 & i \\ -i & 0 \end{array} \right)\quad ,\quad \sigma_3 = \left( \begin{array}{cc} 1 & 0 \\ 0 & -1 \end{array} \right)
	\end{align*}
	which form a basis of $\textnormal{sl}(2,\mathbb{C})$. Then
	\[
	\eta(e^*_{ij}) = \frac{\mu_{ij}}{z_{j} -z_i}( (1-z_iz_{j})\sigma_1 + i (1+z_iz_{j}) \sigma_2 + (z_i+z_{j}) \sigma_3).
	\]
	If we now identify $\textnormal{sl}(2,\mathbb{C})$ with $\mathbb{C}^3$ via
	\[
	\sigma_i \mapsto  \left( \begin{array}{c} 1 \\ 0 \\ 0 \end{array} \right),\sigma_2 \mapsto  \left( \begin{array}{c} 0 \\ 1 \\ 0 \end{array} \right),\sigma_3 \mapsto  \left( \begin{array}{c} 0 \\ 0 \\ 1 \end{array} \right),
	\] 
	we obtain
	\begin{equation}
		\label{eq:weierstrass-one-form}
		\eta(e^*_{ij}) = \frac{\mu_{ij}}{z_{j} -z_i} \left( \begin{array}{c} 1-z_iz_{j} \\ i (1+z_iz_{j}) \\ z_i+z_{j} \end{array} \right).
	\end{equation}
	Thus to every infinitesimal deformation of a realized triangular mesh we can associate a closed $\textnormal{sl}(2,\mathbb{C})$-valued dual 1-form. In the special case of an infinitesimal conformal deformation (i.e. $\mu$ is real-valued) we will see that this yields a discrete analogue of the Weierstrass representation for minimal surfaces.
	
	\section{Weierstrass representation of discrete minimal surfaces}
	
	The Weierstrass representation for minimal surfaces in $\mathbb{R}^3$ is the most classical example for applications of complex analysis:
	
	\begin{theorem}
		Given two meromorphic functions $g,h: U \subset \mathbb{C} \to \mathbb{C}$ such that $g^2 h$ is holomorphic. Then $f:U \to \mathbb{R}^3$ defined by
		\begin{equation*}\label{eq:weierstrass}
			df = \Re \left( \left( \begin{array}{c}
				1-g^2 \\ i(1+g^2) \\ 2g 
			\end{array}\right) h(z) dz \right) = \Re \left( \left( \begin{array}{c}
			1-g^2 \\ i(1+g^2) \\ 2g 
		\end{array}\right) \frac{q}{dg} \right)
	\end{equation*}
	is a minimal surface. Its Gau{\ss} map $n$ is the stereographic projection of $g$
	\begin{equation*}
		n=\frac{1}{|g|^2+1}\left(\begin{array}{c}2\Re g \\ 2 \Im g\\ |g|^2-1 \end{array}\right).
	\end{equation*}
	The holomorphic quadratic differential $q:= hg_z dz^2$ is called the Hopf differential of $f$ and encodes its second fundamental form: The direction defined by a nonzero tangent vector $W$ is
	\begin{equation*}
		\begin{array}{rcl}
			\text{an asymptotic direction} & \iff & q(W) \in i \mathbb{R}. \\
			\text{a principal curvature direction} & \iff & q(W) \in  \mathbb{R}.
		\end{array}	
	\end{equation*}		
	Locally, every minimal surface can be written in this form.
\end{theorem}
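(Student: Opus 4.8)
The plan is to verify the four assertions in turn---that $f$ is well defined, that it is a conformally parametrized minimal surface, that its Gauss map is the stereographic projection of $g$, and that its Hopf differential is $h g_z\,dz^2$---and then to reverse the construction. Throughout I abbreviate $\Phi := \bigl(1-g^2,\ i(1+g^2),\ 2g\bigr)$, so that the integrand is $W\,dz$ with $W = h\Phi$ and the defining relation becomes $f_z = \tfrac12 W$. First I would record that each component of $W$ is a genuinely holomorphic function: away from the poles of $g$ this is immediate, while at a pole of $g$ of order $m$ the hypothesis that $g^2 h$ is holomorphic forces $h$ to vanish to order at least $2m$, so that $h$, $gh$ and $g^2h$---and hence all three entries of $W$---extend holomorphically. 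Consequently $\Re(W\,dz)$ is a closed (indeed componentwise harmonic) real $1$-form, and on the simply connected $U$ it integrates to the map $f$.

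Next I would establish conformality and minimality together. The algebraic identity $(1-g^2)^2 + \bigl(i(1+g^2)\bigr)^2 + (2g)^2 = 0$ shows $\sum_k \Phi_k^2 = 0$, i.e. $f_z$ is a null vector; writing $f_x = \Re W$ and $f_y = -\Im W$ this is exactly $\langle f_x,f_x\rangle = \langle f_y,f_y\rangle$ together with $\langle f_x,f_y\rangle = 0$, so $f$ is conformal. Since the components of $f$ are real parts of holomorphic functions they are harmonic, and a conformal harmonic immersion has vanishing mean curvature; hence $f$ is minimal. For the Gauss map I would use the elementary identity $\Re W \times \Im W = \tfrac{1}{2i}\,\overline W \times W$ to get $f_x\times f_y = \tfrac{|h|^2}{2i}\,\Phi\times\overline\Phi$, then compute $\Phi\times\overline\Phi = 2i(1+|g|^2)\,\bigl(2\Re g,\ 2\Im g,\ |g|^2-1\bigr)$ and normalize to unit length to obtain precisely the claimed $n$.

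For the Hopf differential I would differentiate once more: $f_{zz} = \tfrac12\bigl(h'\Phi + h\Phi'\bigr)$ with $\Phi' = 2g'(-g,\ ig,\ 1)$. A short computation gives $\langle \Phi,n\rangle = 0$ (reflecting that $f_z$ is tangential) and $\langle \Phi',n\rangle = -2g'$, whence $\langle f_{zz},n\rangle = -h g'$, so that the Hopf differential $q=\langle n,f_{zz}\rangle\,dz^2$ equals $h g_z\,dz^2$ up to the overall sign fixed by the normal convention. Since the parametrization is conformal, the second fundamental form is $II = 2\Re(q)$, the mean-curvature term dropping out. Therefore a tangent direction $W$ is asymptotic exactly when $II(W)=0$, i.e. $\Re\bigl(q(W)\bigr)=0$, which is $q(W)\in i\mathbb{R}$, and it is principal exactly when it is a critical direction of $II$ on the unit circle, i.e. $\Im\bigl(q(W)\bigr)=0$, which is $q(W)\in\mathbb{R}$; both conditions are insensitive to the sign ambiguity above.

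Finally, for the converse I would pass to local isothermal coordinates $z$ (which exist on any surface), in which a minimal $f$ is conformal and harmonic, so $f_z$ is a holomorphic null curve $\tfrac12 W$. Setting $h := \tfrac12(W_1 - iW_2)$ and $g := W_3/(W_1 - iW_2)$ inverts the Weierstrass map and recovers meromorphic data with $g^2h = -\tfrac12(W_1 + iW_2)$ holomorphic. I expect the one step needing genuine care---rather than routine algebra---to be this converse together with the pole bookkeeping it shares with the forward direction: one must check that the prescription for $g$ and $h$ behaves well across the zero locus of $W_1 - iW_2$ (the points where $g=\infty$), and that the matching of the order of the pole of $g$ with the order of the zero of $h$ is exactly the holomorphy of $g^2h$. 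The remaining steps are direct, if somewhat lengthy, computations.
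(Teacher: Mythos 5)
This theorem is the classical smooth Weierstrass representation, and the paper itself gives \emph{no} proof of it: it is quoted as background motivation, and the paper's actual content is the discrete analogue proved afterwards by a quite different (combinatorial) argument. So there is nothing in the paper to compare your proof against; judged on its own, your argument is the standard textbook proof and it is essentially correct. The key computations you cite all check out: the nullity identity $(1-g^2)^2+\bigl(i(1+g^2)\bigr)^2+(2g)^2=0$ gives conformality of $f$, harmonicity of the components of $f$ plus conformality gives $H=0$, the cross product $\Phi\times\overline{\Phi}=2i(1+|g|^2)\,\bigl(2\Re g,\,2\Im g,\,|g|^2-1\bigr)$ identifies the unit normal with the claimed $n$, the relations $\langle\Phi,n\rangle=0$ and $\langle\Phi',n\rangle=-2g'$ yield the Hopf differential, and the inversion $h=\tfrac12(W_1-iW_2)$, $g=W_3/(W_1-iW_2)$, with $g^2h=-\tfrac12(W_1+iW_2)$ manifestly holomorphic, settles the converse together with the pole bookkeeping. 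Three minor remarks. First, with the orientation normal $n$ and the convention $q=\langle f_{zz},n\rangle\,dz^2$ one gets $q=-hg_z\,dz^2$; you correctly flag this as a normal/sign convention and correctly observe that the asymptotic and principal direction criteria are insensitive to it. Second, for $f$ to be globally well defined one needs $U$ simply connected (or the statement read locally), which you note; the paper's statement is equally loose here. Third, the converse has the familiar degenerate case $W_1-iW_2\equiv 0$ (a piece of a horizontal plane, Gauss map constant at the north pole), which cannot be represented by a finite meromorphic $g$; your ``care at the zero locus'' remark covers the isolated-zero case but not this identically-vanishing one. Since the classical statement as quoted in the paper glosses over the same point, this is a blemish, not a gap.
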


\begin{figure}[h]
	\centering
	\includegraphics[width=0.9\textwidth]{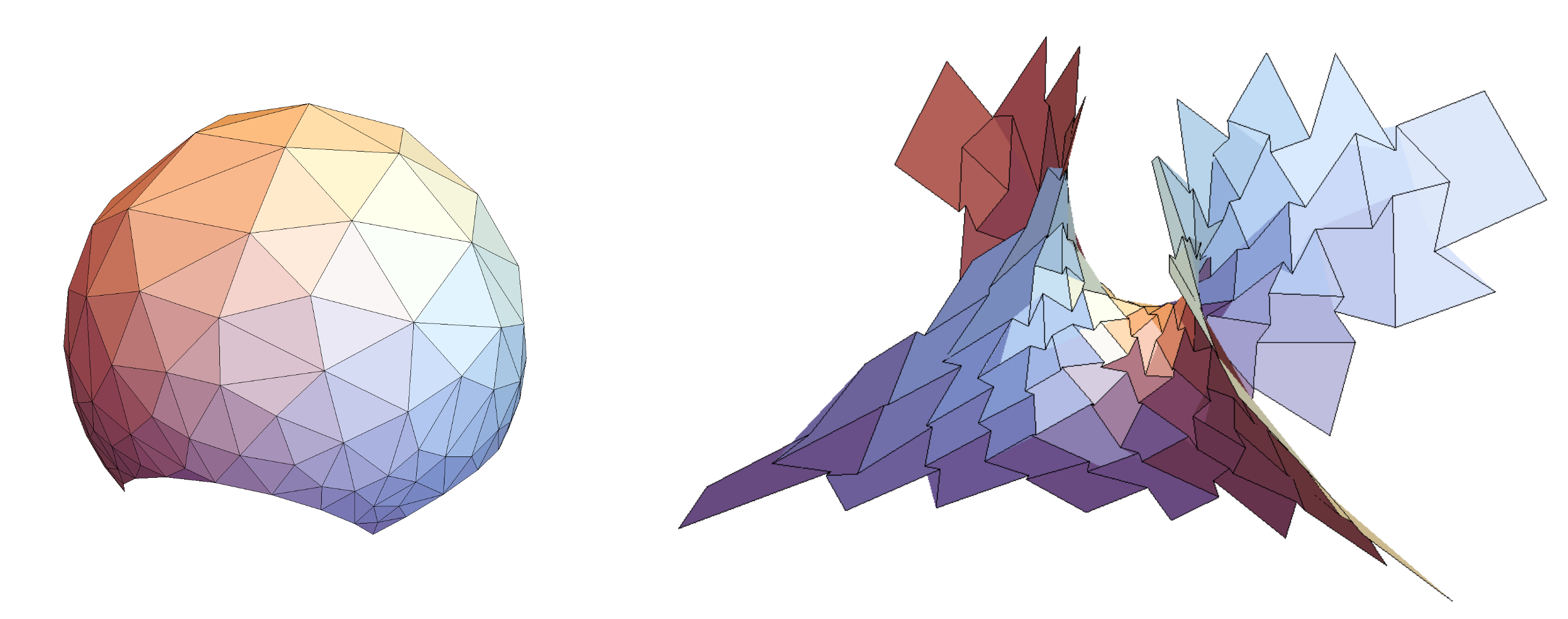}
	\caption{Left: a triangulated surface $n:V \to \mathbb{S}^2$ with vertices on the unit sphere. Right: a discrete minimal surface $f:F \to \mathbb{R}^3$ satisfying Definition \ref{def:discretemin}.}
	\label{fig:minimal}
\end{figure}

We now develop a discrete version of this theorem for arbitrary triangular meshes realized in the complex plane. A similar formula for quadrilateral meshes with factorized real cross ratios was established by Bobenko and Pinkall \cite{Bobenko1996}. Here we will use the definition of a discrete minimal surface $f$ with Gau{\ss} map $n$ given in \cite{Lam2015}:

\begin{definition} \label{def:discretemin}
	Let $n: V\to \mathbb{S}^2$ be a realization of a triangular mesh on the unit sphere in $\mathbb{R}^3$. Then a map $f:F \to \mathbb{R}^3$ defined on the set $F$ of faces is called a discrete minimal surface with Gau{\ss} map $n$ if for all oriented interior edges $e_{ij}$ we have
	\[
	(n_{j} - n_i) \times (f_{ijk} - f_{jil}) =0.
	\]
	Here $\{ijk\}$ and $\{jil\}$ denote the left and the right faces of $e_{ij}$.
\end{definition}

This definition mirrors the fact from the smooth theory that minimal surfaces are Christoffel duals of their Gau{\ss} maps (Figure \ref{fig:minimal}). The correspondence between discrete harmonic functions and discrete minimal surfaces was observed in \cite{Lam2015}. Here is a Weierstrass representation for discrete minimal surfaces in terms of their Gau{\ss} map and their Hopf differential:

\begin{theorem}
	Let $z:V \to \mathbb{C}$ be a realization of a simply connected triangular mesh and $q:E_{int} \to i\mathbb{R}$ a holomorphic quadratic differential. Then there exists $f:F \to \mathbb{R}^3$ such that for every interior edge $\{ij\}$
	\begin{equation} \label{eq:eta}
		df(e^*_{ij}) = \Re\left( \frac{q_{ij}}{i(z_{j} - z_i)}  \left( \begin{array}{c}
			1-z_i z_{j} \\ i(1+z_i z_{j}) \\ z_i + z_{j}
		\end{array}\right) \right).
	\end{equation}
	Moreover $f$ is a discrete minimal surface with Gau{\ss} map
	\begin{equation*}
		n = \frac{1}{|z|^2+1}\left(\begin{array}{c}2\Re z \\ 2\Im z \\ |z|^2-1\end{array}\right).
	\end{equation*}
	Locally, every discrete minimal surface can be written in this form.
\end{theorem}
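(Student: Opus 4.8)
The plan is to recognize the right-hand side of \eqref{eq:eta} as the real part of the closed $\mathrm{sl}(2,\mathbb{C})$-valued dual $1$-form constructed in the previous section, and then to read off the minimal surface condition from the geometry of the inverse stereographic projection. Throughout I abbreviate $V_{ij}:=(1-z_iz_j,\ i(1+z_iz_j),\ z_i+z_j)^{\T}$.

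First I would set $\mu_{ij}:=q_{ij}/i$, which is real because $q_{ij}\in i\mathbb{R}$. With this choice the $\mathbb{C}^3$-valued dual $1$-form of \eqref{eq:weierstrass-one-form}, namely $\eta(e^*_{ij})=\tfrac{\mu_{ij}}{z_j-z_i}V_{ij}$, has real part precisely the right-hand side of \eqref{eq:eta}, so that $df(e^*_{ij})=\mu_{ij}\,\Re\!\big(\tfrac{1}{z_j-z_i}V_{ij}\big)$. Since $q$ is a holomorphic quadratic differential we have $\sum_j q_{ij}=0$ and $\sum_j q_{ij}/(z_j-z_i)=0$ at every interior vertex, hence $\sum_j\mu_{ij}=0$ and $\sum_j\mu_{ij}/(z_j-z_i)=0$, and therefore $\eta$ is closed by \eqref{eq:null-curve}. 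Taking real parts, the $\mathbb{R}^3$-valued dual $1$-form $\Re\eta$ is closed as well, and since the mesh is simply connected the integration argument used in Lemma \ref{lem:integrate} (applied componentwise) produces $f:F\to\mathbb{R}^3$ with $df=\Re\eta$, unique up to a translation.

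Next I would verify the Gauss map condition. The key point is a direct computation using the explicit inverse stereographic projection $n=\frac{1}{|z|^2+1}(2\Re z,\,2\Im z,\,|z|^2-1)^{\T}$, which should yield the identity
\[
n_j-n_i=\frac{2|z_j-z_i|^2}{(|z_i|^2+1)(|z_j|^2+1)}\,\Re\!\left(\frac{1}{z_j-z_i}V_{ij}\right).
\]
Because the prefactor is a positive real number and $df(e^*_{ij})=\mu_{ij}\,\Re(\tfrac{1}{z_j-z_i}V_{ij})$, the vectors $n_j-n_i$ and $df(e^*_{ij})=f_{ijk}-f_{jil}$ are real multiples of one another, so $(n_j-n_i)\times(f_{ijk}-f_{jil})=0$ and $f$ is a discrete minimal surface with Gauss map $n$ in the sense of Definition \ref{def:discretemin}.

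Finally, for the converse I would start from a discrete minimal surface $f$ with Gauss map $n:V\to\mathbb{S}^2$, and after rotating the sphere so that the stereographic projection $z:V\to\mathbb{C}$ of $n$ is a realization, use the displayed identity: the minimal surface condition forces $df(e^*_{ij})$ to be a real multiple of $\Re(\tfrac{1}{z_j-z_i}V_{ij})$, and defining $q_{ij}\in i\mathbb{R}$ by that multiple puts $f$ into the form \eqref{eq:eta}. It remains to show $q$ is a holomorphic quadratic differential, and this is the step I expect to be the main obstacle: a priori closedness of $df=\Re\eta$ (three real equations per interior vertex) is weaker than closedness of the complex form $\eta$, which is what \eqref{eq:null-curve} ties to the two desired conditions. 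The resolution is that, with $\mu=q/i$ real, $\Re\big(\sum_j\eta(e^*_{ij})\big)$ is a real-linear function of the pair $\big(\sum_j\mu_{ij},\ \sum_j\mu_{ij}/(z_j-z_i)\big)\in\mathbb{R}\times\mathbb{C}\cong\mathbb{R}^3$ through a $3\times3$ matrix whose determinant I compute to equal $-(|z_i|^2+1)^2\neq0$. Hence this linear map is invertible, so $\sum_j df(e^*_{ij})=0$ forces both $\sum_j q_{ij}=0$ and $\sum_j q_{ij}/(z_j-z_i)=0$. Since $df$ is exact its closedness is automatic, and the converse follows.
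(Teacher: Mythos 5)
Your proof is correct. The forward direction is essentially the paper's own argument: setting $\mu=q/i$, invoking \eqref{eq:null-curve} and \eqref{eq:weierstrass-one-form} for closedness of $\eta$, integrating over the simply connected mesh, and establishing minimality through the identity
\begin{equation*}
n_j-n_i=\frac{2|z_j-z_i|^2}{(|z_i|^2+1)(|z_j|^2+1)}\,\Re\left(\frac{1}{z_j-z_i}\begin{pmatrix}1-z_iz_j\\ i(1+z_iz_j)\\ z_i+z_j\end{pmatrix}\right),
\end{equation*}
which is precisely the paper's equation \eqref{eq:kminimal} rewritten with $k_{ij}=-i\,q_{ij}/|z_j-z_i|^2$ (I checked it componentwise; it holds). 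Where you genuinely add something is the converse, which the paper compresses into the single sentence ``it can be shown that $q_{ij}:=i\,k_{ij}|z_j-z_i|^2$ is a holomorphic quadratic differential.'' You correctly isolate the only nontrivial point there: exactness of $f$ gives three real equations per interior vertex, namely $\Re\bigl(\sum_j\eta(e^*_{ij})\bigr)=0$, whereas holomorphicity of $q$ is the a priori stronger pair of conditions $\sum_j\mu_{ij}=0$ and $\sum_j\mu_{ij}/(z_j-z_i)=0$. Your linear-algebra resolution works: writing $\eta(e^*_{ij})=\frac{\mu_{ij}}{z_j-z_i}A_i+\mu_{ij}B_i$ with $A_i=(1-z_i^2,\,i(1+z_i^2),\,2z_i)$ and $B_i=(-z_i,\,iz_i,\,1)$ depending only on the central vertex, the vertex sum $\Re\bigl(\sum_j\eta(e^*_{ij})\bigr)$ is the image of $\bigl(\sum_j\mu_{ij},\,\sum_j\mu_{ij}/(z_j-z_i)\bigr)\in\mathbb{R}\times\mathbb{C}$ under a fixed real $3\times3$ matrix, and its determinant is indeed $-(1+|z_i|^2)^2\neq0$ (I recomputed it with $z_i=x+iy$: the expansion gives $-(1+x^2+y^2)^2$). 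Hence real closedness at each interior vertex forces both complex conditions, which makes the converse rigorous rather than asserted. In short: same route as the paper for existence and minimality, plus a correct and fully detailed treatment of the converse that the paper leaves to the reader.
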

\begin{proof}
	Suppose $q:E_{int} \to i\mathbb{R}$ is a holomorphic quadratic differential. Then by \eqref{eq:null-curve} and \eqref{eq:weierstrass-one-form} the dual 1-form $\eta$ defined as
	\[
	\eta(e^*_{ij}) :=  \frac{q_{ij}}{i(z_{j} - z_i)}  \left( \begin{array}{c}
	1-z_i z_{j} \\ i(1+z_i z_{j}) \\ z_i + z_{j}
	\end{array}\right) 
	\]
	satisfies
	\[
	\sum_j \eta(e^*_{ij}) =0.
	\]
	for all interior vertices $i$. Therefore, since the triangular mesh is simply connected, there exists $\mathfrak{F}:F \to \mathbb{C}^3$ such that for any interior edge $e$ we have
	\[
	d\mathfrak{F}(e^*) = \eta(e^*).
	\]
	Thus the map $f:F \to \mathbb{R}^3$ defined by $f:= \Re \mathfrak{F}$ satisfies Equation \eqref{eq:eta}. To show that $f$ is a discrete minimal surface we define a function $k:E_{int} \to \mathbb{R}$ by
	\[
	k_{ij} := -i \,q_{ij}/|z_{j} - z_i|^2.
	\]
	Then by direct computation we obtain
	\begin{equation} \label{eq:kminimal}
		df(e^*_{ij}) = \frac{k_{ij} (1+|z_i|^2) (1+|z_{j}|^2)}{2}  (n_{j} - n_i).
	\end{equation}
	This shows that $f$ is a discrete minimal surface with Gau{\ss} map $n$. The converse is straightforward: Given a discrete minimal surface $f$ with Gau{\ss} map $n$ we define $k:E_{int} \to \mathbb{R}$ via \eqref{eq:kminimal}. Then it can be shown that the function
	\[
	q_{ij}:=  i\,k_{ij} |z_{j}-z_i|^2
	\]
	is a holomorphic quadratic differential.  
\end{proof}

\begin{remark}
	The discrete minimal surfaces given by \eqref{eq:eta} are trivalent meshes with planar vertex stars for purely imaginary $q$. It is closely related to discrete asymptotic nets. The factor $i$ in front of $z_{j}-z_i$ appears since the integration is taken over a dual mesh while in the smooth theory $*dz=idz$.
\end{remark}

Note that we could also consider the periodic one-parameter family of maps $f^\alpha:F \to \mathbb{R}^3$ defined for $\alpha \in \mathbb{R}$ by
\[
f^{\alpha}:= \Re (e^{i\alpha} \mathfrak{F}).
\]
This family of discrete surfaces can be regarded as an associate family of minimal surfaces and is investigated in \cite{Lam2015b}.
	
	\appendix
	
	\bibliographystyle{abbrv}
	\bibliography{holomorphic_quadratic}

\end{document}